\newcommand{\ad }{\mathrm{ad}\,}
\newcommand{\fie }{\Bbbk}
\newcommand{\dpu }{\hat{u}}   % divided power u
\newcommand{\lex }{<_{\mathrm{lex}}}
\newcommand{\NA }{\mathcal{B}}
\newcommand{\ndN }{\mathbb{N}}
\newcommand{\ndZ }{\mathbb{Z}}
\newcommand{\ndJ }{\mathbb{J}}
\newcommand{\ot }{\otimes}
\newcommand{\su }{S}
\newcommand{\tabincell}[2]{\begin{tabular}{@{}#1@{}}#2\end{tabular}}
\newtheorem{theo}{Theorem}[section]
\newtheorem{prop}[theo]{Proposition}
\newtheorem{lemm}[theo]{Lemma}
\newtheorem{coro}[theo]{Corollary}
\theoremstyle{definition}
\newtheorem{defi}[theo]{Definition}
\newtheorem{exam}[theo]{Example}
\newtheorem{rema}[theo]{Remark}
\begin{document}
\title[Root multiplicities for Nichols algebras]
{Root multiplicities for Nichols algebras of diagonal type of rank two}

\author{I. Heckenberger}
\address{Philipps-Universit\"at Marburg,
FB Mathematik und Informatik,
Hans-Meerwein-Stra\ss e,
35032 Marburg, Germany.}
\email{heckenberger@mathematik.uni-marburg.de}

\author{Y. Zheng}
\address{Department of mathematics East China Normal University,Shanghai 200241,China.}
\email{52150601007@ecnu.cn}
\thanks{The second named author was supported by China Scholarship Council}

\date{}

\maketitle

\begin{abstract}
  We determine the multiplicities of a class of roots for
	Nichols algebras of diagonal type of rank two, and identify the corresponding
	root vectors. Our analysis is based on a precise description of the relations
	of the Nichols algebra in the corresponding degrees.

	\textit{Keywords}: {Nichols algebra, super-letter, root vector, multiplicity}
\end{abstract}

\section{Introduction}

Since the introduction of Nichols algebras in the late 1990-ies, the topic
developed to an own-standing research field with many relationships to different
(mainly algebraic or combinatorial) fields in mathematics. In particular,
Nichols algebras
are heavily used for the study of pointed Hopf algebras. Although Nichols
algebras can be defined in any suitable braided monoidal category, a big part of the
theory is dominated by Nichols algebras of diagonal type.

By now, a deep
understanding of the structure of finite-dimensional Nichols algebras of
diagonal type is available, based on the existence of a PBW basis \cite{khar} and the notion
of roots \cite{H2006}.
In the general setting, one is constantly tempted to seek for relationships
with Kac-Moody and Borcherds Lie (super) algebras.
The latter seems to be very strong in the finite case because of the
definitions of real roots in the two theories. However, the knowledge about
imaginary roots and their multiplicities is little in the case of Kac-Moody
algebras, and even poorer for Nichols algebras of diagonal type.
For information on recent activities in the theory of Kac-Moody algebras we refer
to \cite{AKV2016}.
With our
results
we make a small step towards a better understanding of the Nichols algebra theory in this
respect.

In this paper, we concentrate on Nichols algebras of
diagonal type of rank two. In order to clarify the context, we introduce the
notion of root vector candidates and root vectors.
We focus on the special roots $m\alpha_1+2\alpha_2$, where $m\in \ndN_0$ and
$\alpha_1, \alpha_2$ is the standard basis of $\ndZ^2$.
(The root multiplicities of $m\alpha_1+k\alpha_2$ with $m\in \ndN_0$, $k\in
\{0,1\}$, have been known before.)
We identify the family $(P_k)_{k\in \ndN _0}$ in the free algebra over a
two-dimensional braided vector space $V$ of diagonal type, and relate the relations
in the Nichols algebra of $V$ of degree $m\alpha_1+2\alpha_2$ to this family.
We find two of our results particularly interesting.
First, in Proposition~\ref{cor:48} we prove that if a root vector
candidate is a root vector, then any lexicographically larger root vector candidate
of the same degree is a root vector, too.
Second, in Theorem~\ref{main} we describe precisely when
a root vector candidate is a root vector. To do so, we define a subset $\ndJ$
of $\ndN _0$ depending on the given braiding, which measures the multiplicities
of all roots of the form $m\alpha_1+2\alpha_2$ in a simple way. For the
calculation of $\ndJ$ one needs only elementary (and simple) calculations with
Laurent polynomials in three indeterminates. Unfortunately, the proof of this
theorem requires that we work over a field of characteristic $0$.

The paper is organized as follows.
In Section~\ref{se:equations}, we give some equations for Gaussian binomial
coefficients, which will be needed later.
In Section~\ref{se:prelims}, we recall some
fundamental definitions and results on which our work is based.
In Section~\ref{se:multi} we formulate and prove our main results mentioned
above. We also conclude a non-trivial lower bound on root multiplicities.

The authors thank Eric Heymann-Heidelberger for interesting discussions on the
topic. The paper was written during the visit of the second author to Marburg University
supported by China Scholarship Council. And the second author thanks department of
  FB Mathematik and Informatik of  Marburg University for hospitality.

\section{Quantum integers and Gaussian binomial coefficients}
\label{se:equations}

Throughout the paper let $\ndN $ denote the set of positive integers and let
$\ndN _0=\ndN \cup\{0\}$. We write $\ndZ $ for the set of integers.

For our study of Nichols algebras we will need some non-standard formulas
for quantum integers and Gaussian binomial coefficients.

In the ring $\ndZ[q]$, let $(0)_q=0$ and for any $m\in \ndN$, let
$$(m)_q=1+q+q^2+\cdots+q^{m-1}$$
and $(-m)_q=-(m)_q$.
The polynomials $(m)_q$ with $m\in \ndZ$ are also known as quantum integers.
Moreover, let $(0)_q^!=1$, and for any $m\in \ndN$ let
$(m)_q^!=\prod_{i=1}^m(i)_q$.
For any $i,m\in \ndZ $ with $0\le i\le m$, the rational function
$${m\choose i}_q=\frac{(m)_q^!}{(i)_q^!(m-i)_q^!} \in \mathbb{Q}(q) $$
is in fact an element of $\ndZ [q]$ and is called a Gaussian binomial
coefficient. For $m\in \ndN_0$, $i\in \ndZ$ with $i<0$ or $i>m$ one defines
${m\choose i}_q=0$.
The Gaussian binomial coefficients satisfy the following formulas:
\begin{align}
	\label{eq:qbinomial1}
	{m\choose i}_q=&\,{m\choose m-i}_p,\\
	\label{eq:qbinomial2}
	{m\choose i}_q=&\,q^i{m-1\choose i}_q+{m-1\choose i-1}_q,\\
	\label{eq:qbinomial3}
	{m\choose i}_q=&\,{m-1\choose i}_q+q^{m-i}{m-1\choose i-1}_q
\end{align}
for any $m\in \ndN $, $i\in \ndZ $.

\begin{lemm} \label{le:binomsum}
	Let $t\in \ndN_0$ and $k\in \ndZ $ with $k\ge -1$. Then
	$$
	\sum_{j=t}^kq^{-j(j+1)/2}q^{(j-t)(j-t-1)/2}{j\choose t}_q
	=q^{-(t+1)(2k-t)/2}{k+1\choose t+1}_q.
	$$
\end{lemm}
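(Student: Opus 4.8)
The plan is to prove the identity by induction on $k$, but only after simplifying the exponents so that the combinatorial content becomes transparent. First I would combine the two quadratic exponents appearing in the summand. A short computation gives
$$-\frac{j(j+1)}{2}+\frac{(j-t)(j-t-1)}{2}=-j(t+1)+\frac{t(t+1)}{2},$$
so that the constant factor $q^{t(t+1)/2}$ can be pulled out of the sum. On the right-hand side the exponent collapses as $-\frac{t(t+1)}{2}-\frac{(t+1)(2k-t)}{2}=-(t+1)k$. Dividing both sides by $q^{t(t+1)/2}$, the lemma becomes equivalent to the cleaner identity
$$\sum_{j=t}^{k}q^{-j(t+1)}{j\choose t}_q=q^{-(t+1)k}{k+1\choose t+1}_q.$$

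Next I would establish this reformulated identity by induction on $k\ge -1$, with $t$ fixed. For every $k$ with $-1\le k\le t-1$ both sides vanish, since the sum is empty and ${k+1\choose t+1}_q=0$ (here $t+1>k+1$); this serves as the base of the induction. For the inductive step I would assume the identity for some $k\ge t-1$ and split off the top term $j=k+1$ from the sum for $k+1$. Using the induction hypothesis on the remaining terms, the left-hand side for $k+1$ becomes
$$q^{-(t+1)k}{k+1\choose t+1}_q+q^{-(k+1)(t+1)}{k+1\choose t}_q.$$

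Finally I would factor out $q^{-(t+1)(k+1)}$, leaving the bracket $q^{t+1}{k+1\choose t+1}_q+{k+1\choose t}_q$. By the recurrence \eqref{eq:qbinomial2} applied with $m=k+2$ and $i=t+1$, this bracket equals ${k+2\choose t+1}_q$, and hence the whole expression equals $q^{-(t+1)(k+1)}{k+2\choose t+1}_q$, which is exactly the right-hand side for $k+1$. The only delicate point is the exponent bookkeeping in the first step: a sign slip in either of the two quadratic simplifications would obscure the clean identity, so I would verify both carefully. Once the exponents are reduced correctly, the recurrence \eqref{eq:qbinomial2} does all the real work and the induction is routine.
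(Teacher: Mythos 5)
Your proof is correct and takes essentially the same route as the paper's: induction on $k$, peeling off the top summand and applying the recurrence \eqref{eq:qbinomial2} with $m=k+2$, $i=t+1$. Your preliminary reduction of the exponents, turning the identity into $\sum_{j=t}^{k}q^{-j(t+1)}{j\choose t}_q=q^{-(t+1)k}{k+1\choose t+1}_q$, is a pleasant cosmetic simplification but does not change the underlying argument.
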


\begin{proof}
	We proceed by induction on $k$. For $k<t$ the claim is trivial.
	Assume now that $t,k\in \ndN_0$ and that the claim holds for $t$ and $k-1$.
	The summand for $j=k$ on the left hand side is $q^{-(t+1)(2k-t)/2}{k\choose
	t}_q$. By subtracting this from both sides of the equation and using
	Equation~\eqref{eq:qbinomial2}, the claim follows from the induction
	hypothesis.
\end{proof}

\begin{lemm} \label{le:lesscrazysum}
	Let $m\in \ndN_0$ and $n\in \ndZ$. Then in $\ndZ[q,t]$ we have
 $$\sum_{i=0}^m{m\choose i}_{\!\!q}q^{i(i-1)/2}
 \prod_{j=0}^{i-1}(q^{j+n}t^2-t)\prod_{j=1}^{m-i}(1-q^{m+n-j}t^2)
 =\prod_{j=0}^{m-1}(1-q^jt).$$
\end{lemm}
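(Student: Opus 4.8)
The plan is to prove the identity by induction on $m$, using the $q$-Pascal recursion \eqref{eq:qbinomial3} to relate the sum for $m$ to two sums for $m-1$. Write $S_m(n)$ for the left-hand side. The base case $m=0$ is immediate, since all products are empty and both sides equal $1$. For the inductive step I would assume the identity holds for $m-1$ and \emph{every} $n\in\ndZ$; this freedom in the second argument turns out to be essential. As a preliminary I would record the rewriting $q^{j+n}t^2-t=-t(1-q^{n+j}t)$, so that each summand of $S_m(n)$ carries a factor $(-t)^i$ together with the products $\prod_{j=0}^{i-1}(1-q^{n+j}t)$ and $\prod_{j=1}^{m-i}(1-q^{m+n-j}t^2)$; this bookkeeping keeps the index shifts below transparent.

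Applying \eqref{eq:qbinomial3}, i.e. ${m\choose i}_q={m-1\choose i}_q+q^{m-i}{m-1\choose i-1}_q$, splits $S_m(n)$ into two sums $T_1+T_2$. In $T_1$ the term $i=m$ drops out because ${m-1\choose m}_q=0$, and each remaining summand differs from that of $S_{m-1}(n)$ only by the common factor $1-q^{m+n-1}t^2$ (the $j=1$ term of the second product); hence $T_1=(1-q^{m+n-1}t^2)\,S_{m-1}(n)$. In $T_2$ the term $i=0$ drops out since ${m-1\choose -1}_q=0$, and after the shift $i\mapsto i+1$ the relation $(i+1)i/2=i(i-1)/2+i$ absorbs $q^{m-i}$ into $q^{m-1}$. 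Here the second products match those of $S_{m-1}(n+1)$ exactly, while the first product acquires one extra ($j=0$) factor contributing $q^nt^2-t$, giving $T_2=q^{m-1}(q^nt^2-t)\,S_{m-1}(n+1)$.

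The step I expect to need the most care is verifying that the two product ranges transform correctly under the shift $i\mapsto i+1$, so that $T_2$ reproduces precisely $S_{m-1}(n+1)$ and not a sum with a mismatched offset in $n$ or $m$. This is where the choice of \eqref{eq:qbinomial3} over \eqref{eq:qbinomial2} matters: it is the one that cleanly separates a factor off the top of the second product on the one side and produces the shifted second product on the other.

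Finally I would invoke the induction hypothesis for both $S_{m-1}(n)$ and $S_{m-1}(n+1)$, which by the $n$-independence of the right-hand side both equal $\prod_{j=0}^{m-2}(1-q^jt)=:P_{m-1}$. Then
$$S_m(n)=\bigl[(1-q^{m+n-1}t^2)+q^{m-1}(q^nt^2-t)\bigr]P_{m-1},$$
and the crucial point is that the two $t^2$-contributions cancel, leaving the bracket equal to $1-q^{m-1}t$. Hence $S_m(n)=(1-q^{m-1}t)P_{m-1}=\prod_{j=0}^{m-1}(1-q^jt)$, completing the induction. The heart of the argument is thus the pairing of \eqref{eq:qbinomial3} with the two-fold use of the induction hypothesis at $n$ and $n+1$: this is exactly what produces the cancellation that explains why the left-hand side does not depend on $n$.
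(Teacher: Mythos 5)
Your proof is correct and follows essentially the same route as the paper's: induction on $m$ via Equation~\eqref{eq:qbinomial3}, splitting the sum so that one piece is $(1-q^{m+n-1}t^2)$ times the hypothesis at $n$ and the other is $q^{m-1}(q^nt^2-t)$ times the hypothesis at $n+1$, followed by the cancellation giving the factor $1-q^{m-1}t$. Your explicit remark that the induction hypothesis must hold for all $n$ simultaneously is precisely the (implicit) mechanism in the paper's argument as well.
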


 \begin{proof}
We prove the claim by induction on $m$.

For $m=0$ the claim is trivial.
Now assume that the claim holds for some $m\in \ndN_0$. By applying
Equation~\eqref{eq:qbinomial3} we obtain that
\begin{align*}
 & \sum_{i=0}^{m+1}{{m+1}\choose i}_{\!\!q}q^{i(i-1)/2}
  \prod_{j=0}^{i-1}(q^{j+n}t^2-t)\prod_{j=1}^{m+1-i}(1-q^{m+n+1-j}t^2)\\
 &=\sum_{i=0}^{m}{{m}\choose i}_{\!\!q}q^{i(i-1)/2}
  \prod_{j=0}^{i-1}(q^{j+n}t^2-t)\prod_{j=1}^{m+1-i}(1-q^{m+n+1-j}t^2)\\
 &+\sum_{i=1}^{m+1}{{m}\choose {i-1}}_{\!\!q}q^{m+1-i}q^{i(i-1)/2}
 \prod_{j=0}^{i-1}(q^{j+n}t^2-t)
 \prod_{j=1}^{m+1-i}(1-q^{m+n+1-j}t^2).
\end{align*}
Regarding the first term, note that
$$ \prod_{j=1}^{m+1-i}(1-q^{m+n+1-j}t^2)
=(1-q^{m+n}t^2)\prod_{j=1}^{m-i}(1-q^{m+n-j}t^2).
$$
Hence, by induction hypothesis, the first term is equal to
$$(1-q^{m+n}t^2)\prod_{j=0}^{m-1}(1-q^jt).$$
Moreover, the second term is equal to
\begin{align*}
 \sum_{i=0}^{m}{{m}\choose {i}}_{\!\!q}q^{m-i}q^{i(i+1)/2}
  \prod_{j=0}^{i}(q^{j+n}t^2-t)\prod_{j=1}^{m-i}(1-q^{m+n+1-j}t^2).
\end{align*}
Now by using that
$$ q^{m-i}q^{i(i+1)/2}\prod_{j=0}^i(q^{j+n}t^2-t)
=q^m q^{i(i-1)/2}(q^nt^2-t)\prod_{j=0}^{i-1}(q^{j+n+1}t^2-t),
$$
induction hypothesis implies that the second term is equal to
\begin{align*}
	q^m(q^nt^2-t)\prod_{j=0}^{m-1}(1-q^jt).
\end{align*}
Since $1-q^{m+n}t^2+q^m(q^nt^2-t)=1-q^mt$, the claim holds for $m+1$.
\end{proof}

\begin{lemm}\label{le:Q1Q2}
Let $k, m\in \ndN _0$ and
\begin{align*}
	&Q_1^{k,m}=\sum_{i=0}^{m}{{m+1}\choose
 {i}}_qq^{i(2k+i-1)/2}\prod_{j=0}^{i-1}(q^{k+j}r^2-r)
 \prod_{j=1}^{m-i}(1-q^{2k+m-j}r^2)\\
 &Q_2^{k,m}=
 \frac{q^{(2k+m)(m+1)/2}(-r)^{m+1}-1}{q^{2k+m}r^2-1}
 \prod_{i=0}^{m}(1-q^{k+i}r)
\end{align*}
in $\ndZ[q,r]$. Then $Q_1^{k,m}=Q_2^{k,m}$.
\end{lemm}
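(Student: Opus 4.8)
The plan is to reduce the identity to Lemma~\ref{le:lesscrazysum} by means of the substitution $t=q^kr$, which collapses the two $q$-powers and the first product of $Q_1^{k,m}$ into exactly the shape occurring in that lemma. First I would observe that $q^{i(2k+i-1)/2}=q^{ki}q^{i(i-1)/2}$ and that distributing one factor $q^k$ into each of the $i$ terms of $\prod_{j=0}^{i-1}(q^{k+j}r^2-r)$ yields $q^{ki}\prod_{j=0}^{i-1}(q^{k+j}r^2-r)=\prod_{j=0}^{i-1}(q^{j}t^2-t)$; similarly $\prod_{j=1}^{m-i}(1-q^{2k+m-j}r^2)=\prod_{j=1}^{m-i}(1-q^{m-j}t^2)$. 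Hence
\begin{align*}
	Q_1^{k,m}=\sum_{i=0}^{m}{{m+1}\choose {i}}_q q^{i(i-1)/2}
	\prod_{j=0}^{i-1}(q^{j}t^2-t)\prod_{j=1}^{m-i}(1-q^{m-j}t^2),\qquad t=q^kr.
\end{align*}

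The key step is to recognize this as a truncation of the identity in Lemma~\ref{le:lesscrazysum} with $m$ replaced by $m+1$, with $n=0$, and with indeterminate $t$. Indeed, multiplying the displayed sum by $1-q^mt^2$ turns each second product $\prod_{j=1}^{m-i}(1-q^{m-j}t^2)$ into $\prod_{j=1}^{m+1-i}(1-q^{m+1-j}t^2)$, so that $(1-q^mt^2)Q_1^{k,m}$ becomes precisely the level-$(m+1)$ sum of Lemma~\ref{le:lesscrazysum}, except that the top summand $i=m+1$ is absent. Since that lemma evaluates the full sum to $\prod_{j=0}^{m}(1-q^jt)$, and the missing summand is $q^{m(m+1)/2}\prod_{j=0}^{m}(q^jt^2-t)$ (the second product being empty and ${{m+1}\choose{m+1}}_q=1$), I obtain
\begin{align*}
	(1-q^mt^2)\,Q_1^{k,m}
	=\prod_{j=0}^{m}(1-q^jt)-q^{m(m+1)/2}\prod_{j=0}^{m}(q^jt^2-t).
\end{align*}

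To conclude I would factor $q^jt^2-t=-t(1-q^jt)$, so that $\prod_{j=0}^{m}(q^jt^2-t)=(-t)^{m+1}\prod_{j=0}^{m}(1-q^jt)$, giving
\begin{align*}
	(1-q^mt^2)\,Q_1^{k,m}
	=\bigl(1-q^{m(m+1)/2}(-t)^{m+1}\bigr)\prod_{j=0}^{m}(1-q^jt).
\end{align*}
Re-substituting $t=q^kr$ turns $1-q^mt^2$ into $1-q^{2k+m}r^2$, the factor $q^{m(m+1)/2}(-t)^{m+1}$ into $q^{(2k+m)(m+1)/2}(-r)^{m+1}$ (since $q^{m(m+1)/2}(q^k)^{m+1}=q^{(2k+m)(m+1)/2}$), and $\prod_{j=0}^m(1-q^jt)$ into $\prod_{i=0}^m(1-q^{k+i}r)$; multiplying through by $-1$ then yields the polynomial identity $(q^{2k+m}r^2-1)Q_1^{k,m}=\bigl(q^{(2k+m)(m+1)/2}(-r)^{m+1}-1\bigr)\prod_{i=0}^m(1-q^{k+i}r)$, which is exactly $Q_1^{k,m}=Q_2^{k,m}$ and in addition shows that the fraction defining $Q_2^{k,m}$ really is an element of $\ndZ[q,r]$. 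I expect the only delicate point to be the bookkeeping in the key step: checking that multiplication by $1-q^mt^2$ reconstitutes the level-$(m+1)$ second products verbatim and that the single omitted summand is the one at $i=m+1$. Once this telescoping-type observation is secured, everything else is routine algebra.
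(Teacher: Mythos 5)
Your proof is correct and takes essentially the same route as the paper: both multiply $Q_1^{k,m}$ by $1-q^{2k+m}r^2$, identify the result as the level-$(m+1)$ instance of Lemma~\ref{le:lesscrazysum} with $n=0$ and $t=q^kr$ minus the single $i=m+1$ summand, and then cancel the common factor in the integral domain $\ndZ[q,r]$. The only (cosmetic) difference is that the paper checks $Q_2^{k,m}\in\ndZ[q,r]$ upfront by a separate parity/divisibility argument, whereas you obtain this integrality for free as a byproduct of the polynomial identity.
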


\begin{proof}
	Clearly, $Q_1^{k,m}\in \ndZ[q,r]$.
	If $m$ is odd then $q^{2k+m}r^2-1$ divides $(q^{2k+m}r^2)^{(m+1)/2}-1$ in
	$\ndZ[q,r]$. If $m$ is even, then
	$$q^{2k+m}r^2-1=(q^{k+m/2}r-1)(q^{k+m/2}r+1).$$
	Since $q^{k+m/2}r-1$ divides $\prod_{i=0}^m(1-q^{k+i}r)$ and
	$q^{k+m/2}r+1$ divides $-(q^{k+m/2}r)^{m+1}-1$ in $\ndZ[q,r]$, we conclude
	that $Q_2^{k,m}\in \ndZ[q,r]$. Moreover,
\begin{align*}
	&Q_1^{k,m}(1-q^{2k+m}r^2)\\
&=\sum_{i=0}^{m}{{m+1}\choose {i}}_{\!\!q}q^{i(2k+i-1)/2}
  \prod_{j=0}^{i-1}(q^{k+j}r^2-r)
  \prod_{j=0}^{m-i}(1-q^{2k+m-j}r^2)\\
&=\sum_{i=0}^{m+1}{{m+1}\choose {i}}_{\!\!q}q^{i(2k+i-1)/2}
  \prod_{j=0}^{i-1}(q^{k+j}r^2-r)
  \prod_{j=1}^{m+1-i}(1-q^{2k+m+1-j}r^2)\\
  &\quad -q^{(m+1)(2k+m)/2}\prod_{j=0}^{m}(q^{k+j}r^2-r).
\end{align*}
By Lemma~\ref{le:lesscrazysum} for $m+1$, $n=0$ and $t=q^kr$, the first
term is equal to $\prod_{j=0}^m(1-q^{j+k}r)$.
{}From this it follows that $Q_1^{k,m}=Q_2^{k,m}$.
\end{proof}

\section{Preliminaries on Nichols algebras}
\label{se:prelims}

In the remaining part of the paper, let $\fie$ be a field
and let $\fie^\times=\fie\setminus \{0\}$.

%\subsection{Root vectors and multiplicities.}
We start with collecting some information on Lyndon words, which is fairly
standard.

Let $A$ be a finite set (called the alphabet) and let $\mathbb A$ and
$\mathbb A^\times$ denote the set of words and nonempty words, respectively,
with letters in $A$. For any $s\in \ndN_0$, $a_1, a_2,\dots,a_s \in A$ and
$u=a_1\cdots a_s \in \mathbb A$ we write $|u|=s$ and call $s$ the
\textbf{length} of $u$.

We fix a total ordering $<$ on $A$. It induces a total ordering $\lex$ on $\mathbb A$
called the lexicographic ordering:
Two elements $u,v\in \mathbb A$ satisfy $u\lex v$ if and only if either
$v=uw$ for some $w\in \mathbb A^\times $,
or there exist $w,u',v'\in \mathbb A$ and $a,b \in A$
such that $u=wau'$, $v=wbv'$, and $a<b$.

We say that a word $u\in \mathbb A^\times$ is a \textbf{Lyndon word} if for any
decomposition $u=wv$, $w,v\in \mathbb A^\times$, the relation $u\lex vw$ holds.

A word $u\in \mathbb A^\times$ is a Lyndon word if and only if either $u\in
A$, or there exist Lyndon words $w,v\in \mathbb A^\times $ such that $w\lex v$ and $u=wv$.

Any Lyndon word $u$ of length at least two
has a unique decomposition into the product of two Lyndon words
$u=wv$, where $|w|$ is minimal. It is called the \textbf{Shirshow decomposition}
of $u$.

The theory of Lyndon words is used in \cite{khar} to define PBW bases of Nichols algebras of
diagonal type. (In fact, in \cite{khar} a much more general situation is
considered.)

Let $n\in \ndN $ and let $(V,c)$ be an $n$-dimensional braided vector space of
diagonal type. Let $I=\{1,\dots,n\}$, and let
$(q_{ij})_{i,j\in I}\in (\fie^\times)^{n\times n}$
and
$x_1,\dots,x_n$ be a basis of $V$ such that
$$ c(x_i\otimes x_j)=q_{ij}x_j\otimes x_i$$
for any $i,j\in I$.
Let $T(V)$ and $\NA(V)$ denote the tensor algebra and the Nichols algebra of
$V$, respectively.
For the basics of the theory of Nichols algebras we refer to
\cite{inp-AndrSchn02}.
We write $\pi:T(V)\to \NA (V)$ for the canonical map.

Let $\alpha_1,\dots ,\alpha_n$ be the standard basis of $\ndZ^n$
and $\chi:\ndZ^n \times \ndZ^n \to \fie^\times $ be the bicharacter
on $\ndZ^n$ such that $\chi(\alpha_i,\alpha_j)=q_{ij}$ for any $i,j\in I$.

Let $X=\{x_1,\dots,x_n\}$, and fix the total ordering on $X$ such that
$x_i<x_j$ whenever $1\le i<j\le n$.
Let $\mathbb X$ and $\mathbb X^\times$ denote the set of words and non-empty
words over the alphabet $X$, respectively.
The elements of $\mathbb X$ can naturally be viewed as elements of (any
quotient of) $T(V)$,
and as such they form a vector space basis of $T(V)$.
Both $T(V)$ and $\NA (V)$ have a unique $\ndZ^n$-graded braided bialgebra
structure such that $\deg (x_i)=\alpha_i$ for any $i\in I$.
In particular, for any $k\in \ndN _0$ and $l_1,\dots,l_k \in I$
the degree of $x_{l_1}\cdots x_{l_k}$ is
$\sum_{i=1}^k\alpha_{l_i}$.
We write $\deg(x)$ for the degree of any homogeneous element $x$ of $T(V)$ or
$\NA (V)$.

For a Lyndon word $u\in \mathbb X^\times$, following \cite{khar} we define
the \textbf{super-letter} $[u]\in\NA(V)$ inductively as follows:
\begin{enumerate}
 \item $[u]=u$, if $u\in X$, and
\item $[u]=[v][w]-\chi(\deg(v),\deg(w))[w][v]$
  if $u\in \mathbb{X}^\times$, $|u|\ge 2$, and  $u=vw$ is the Shirshow
  decomposition of $u$.
\end{enumerate}
Moreover, for any Lyndon word $u$ and any integer $k\ge 2$ let $[u^k]=[u]^k$.

The total ordering on $\mathbb{X}$ induces a total ordering on the set of super-letters:
$$[u]<[v]\,\Leftrightarrow\, u\lex v.$$

For any $\alpha \in \ndZ^n$, let $o_\alpha \in \ndN \cup \{\infty\}$
be the multiplicative order of $\chi(\alpha ,\alpha )\in \fie^\times $.
Moreover, let
\begin{align*}
	O_\alpha =\begin{cases}
		\{1,o_\alpha,\infty \} & \text{if $o_\alpha=\infty $ or
			$\mathrm{char}(\fie)=0$,}\\
		\{1,o_\alpha p^k,\infty \mid k\in \ndN_0 \} &
		\text{if $o_\alpha <\infty $, $p=\mathrm{char}(\fie )>0$.}
	\end{cases}
\end{align*}
Kharchenko proved the following fundamental result on Nichols algebras.

\begin{theo} \cite{khar}
  There exists a set $L$ of Lyndon words and a function $h:L\to \ndN\cup
	\{\infty\}$, where $h(v)\in O_{\deg v}\setminus \{1\}$ for any $v\in L$,
  such that the elements
  \begin{align*}
    [v_k]^{m_k}\cdots [v_1]^{m_1},\quad &
    \text{$k\in \ndN _0$, $v_1,\dots,v_k\in L$, $v_1\lex v_2\lex \cdots \lex v_k$,}\\
    & \text{$0< m_i<h(v_i)$ for any $i$,}
  \end{align*}
  form a vector space basis of $\NA (V)$.
\end{theo}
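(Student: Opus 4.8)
The plan is to follow Kharchenko's two-stage strategy: first build a PBW-type basis of the free algebra $T(V)$ indexed by \emph{all} super-letters, and then cut it down to $\NA(V)$ by isolating the hard super-letters together with their heights. The combinatorial backbone is the Chen--Fox--Lyndon factorization: every word $w\in\mathbb X$ has a unique expression $w=u_1u_2\cdots u_s$ as a non-increasing product of Lyndon words $u_1\geq\cdots\geq u_s$. I would first prove the triangularity of bracketing, namely that for a Lyndon word $u$ the super-letter satisfies $[u]=u+\sum_{u\lex w}c_w\,w$, where the sum runs over words $w$ of the same degree with $u\lex w$. Combined with the factorization this yields that the non-increasing products $[u_1]^{a_1}\cdots[u_s]^{a_s}$, over all Lyndon words $u_1\geq\cdots\geq u_s$, form a basis of $T(V)$: the transition matrix to the word basis, ordered by the $\lex$-minimal word occurring in each product, is unitriangular.

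The second stage passes to $\NA(V)=T(V)/I$, where $I$ is the defining ideal (the radical of the canonical braided bilinear form). I would define $L$ to be the set of Lyndon words $v$ whose super-letter $[v]$ cannot be reduced, modulo $I$, to a linear combination of non-increasing products of strictly larger super-letters, and define $h(v)$ as the least exponent at which $[v]^{h(v)}$ becomes so reducible. Two things must then be shown. \textbf{Spanning}: any non-increasing product of super-letters rewrites, modulo $I$, as a $\fie$-linear combination of the restricted monomials $[v_k]^{m_k}\cdots[v_1]^{m_1}$ with $v_i\in L$ and $0<m_i<h(v_i)$. The engine here is a straightening identity for braided commutators: for Lyndon $u\lex w$ one expands $[u][w]-\chi(\deg u,\deg w)[w][u]$ as a combination of super-letter monomials lying strictly between $u$ and $w$, which drives a terminating reduction by induction on degree and on $\lex$.

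\textbf{Linear independence}: this is where the Nichols-algebra structure is essential and where I expect the main difficulty. I would argue by induction on the $\ndZ^n$-degree, using the braided coproduct of $\NA(V)$ together with the skew derivations $\partial_i$. The coproduct of a restricted PBW monomial has a distinguished triangular leading term, and applying the $\partial_i$ lowers the degree while respecting the PBW structure; a nontrivial linear relation among top-degree monomials would therefore descend to one in lower degree, contradicting the inductive hypothesis. Equivalently, one checks that the restricted monomials pair nondegenerately under the canonical form, so that none of them lies in its radical.

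Finally, the constraint $h(v)\in O_{\deg v}\setminus\{1\}$ is pinned down by the quantum binomial theorem applied to a single super-letter with self-braiding $q=\chi(\deg v,\deg v)$: the power $[v]^N$ first becomes reducible exactly when the quantum factorial $(N)_q^!$ acquires a zero, which occurs at multiples governed by the order $o_{\deg v}$ and, in characteristic $p>0$, additionally at $p$-power multiples, precisely the two cases defining $O_\alpha$. The Gaussian binomial identities of Section~\ref{se:equations} feed directly into these computations. The crux of the whole argument is thus the linear independence step together with the exact location of the heights: proving that no collapse beyond the one forced by the bilinear form occurs, and that it occurs at exactly the predicted powers, requires the full braided-Hopf machinery rather than combinatorics alone.
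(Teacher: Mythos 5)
The paper does not prove this statement at all --- it is quoted from Kharchenko \cite{khar} --- so there is no internal proof to compare against; your sketch has to be measured against Kharchenko's original argument, whose broad outline (Chen--Fox--Lyndon factorization, triangularity $[u]=u+\sum_{u\lex w}c_w\,w$, hard super-letters with heights, straightening for the spanning step) you do reproduce faithfully. The genuine problems are in the two places where the real content lies. First, your description of the heights is false, not merely imprecise. You claim that $[v]^N$ ``first becomes reducible exactly when the quantum factorial $(N)_q^!$ acquires a zero'', i.e.\ that the height always equals $o_{\deg v}$ (up to $p$-power factors). The theorem asserts only the one-sided constraint $h(v)\in O_{\deg v}\setminus\{1\}$, which allows $h(v)=\infty$ even when $o_{\deg v}<\infty$, and this possibility really occurs; the paper itself supplies counterexamples. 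Take $q,r,s$ with $qrs=-1$ and $(2)_q(1-r)(1-qr)\ne 0$: the self-braiding of $[x_1x_2]$ is $-1$, so $(2)_{-1}^!=0$, yet $[x_1x_2]^2\ne 0$ in $\NA(V)$ by Lemma~\ref{le:u_m^2}, and Corollary~\ref{co:rootvectors} (with $k=l=1$, where $\ndJ\cap[0,2]=\emptyset$) shows that $[x_1x_2]^2$ is itself a root vector, hence \emph{not} expressible through larger super-letters: the height of $[x_1x_2]$ is $\infty$, not $2$. Indeed, the paper's whole notion of root vector candidates $[v^k]$ with $k\in O_{\deg v}\setminus\{\infty\}$ exists precisely because such powers can be new PBW generators. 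What actually has to be proven is the converse implication: \emph{if} some power of $[v]$ is reducible, then the minimal such exponent $N$ forces ${N\choose i}_q=0$ for all $0<i<N$ (a coproduct computation on $[v]^N$), and this vanishing is what characterizes $N\in O_{\deg v}$. Your quantum-binomial heuristic runs in the wrong logical direction.

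Second, the linear independence step --- which you correctly identify as the crux --- is not closed by either of your two suggestions. Saying that the restricted monomials ``pair nondegenerately under the canonical form'' is not an argument: nonsingularity of the Gram matrix of a spanning set is exactly the statement to be proven, so this is circular. And the descent via skew derivations is unsupported: $d_i$ applied to a monomial $[v_k]^{m_k}\cdots[v_1]^{m_1}$ produces a large sum of terms whose re-expansion in the lower-degree spanning set is itself a straightening problem, and nothing in your sketch shows that the resulting lower-degree relation is nontrivial (triangular with nonzero pivots); a priori the images could cancel. Establishing this triangularity --- or, as in Kharchenko's actual proof, controlling the leading super-words of elements of the defining ideal --- is the technical core of the theorem, and it is missing from the proposal.
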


In fact, the set $L$ and the function $h$ in the above theorem are uniquely
determined.

In some situations it is more appropriate to work with a slightly different
presentation of the above basis of $\NA (V)$, in which the function $h$
does not appear.

\begin{defi}
Let $w\in\mathbb X^\times $. We say that $[w]$ is a
\textbf{root vector candidate} if
$w=v^k$ for some Lyndon word $v$
and $k\in O_{\deg v}\setminus \{\infty \}$.
\end{defi}

\begin{defi} \label{de:rootvector}
A root vector candidate $[w]$, where $w\in \mathbb X^\times $,
is called a \textbf{root vector (of $\NA (V)$)}
if $[w]\in \NA (V)$ is not a
linear combination of elements of the form
$[v_k]^{m_k}\cdots [v_1]^{m_1}$, where $k\in \ndN_0$ and
$[v_1],\dots,[v_k]$ are root vector candidates with
$w\lex v_1\lex \cdots \lex v_k$.
\end{defi}

\begin{rema} \label{re:rvcrit}
  By \cite[Corollary~2]{khar}, for any Lyndon word $w\in \mathbb X^\times$
the root vector candidate $[w]$ is a root vector if and only if
$w\in \NA (V)$ is not a
linear combination of elements of the form
$[v_k]^{m_k}\cdots [v_1]^{m_1}$, where $k\in \ndN_0$ and
$[v_1],\dots,[v_k]$ are root vector candidates with
$w\lex v_1\lex \cdots \lex v_k$.
\end{rema}

Note that in Definition~\ref{de:rootvector} it is not necessary to put assumptions
on the degrees of the monomials, since $\NA (V)$ is graded.

\begin{exam} \label{ex:Lyndon2}
	Assume that $n\ge 2$.
	Let $k\in \ndN_0$.
	The only Lyndon word of degree $k\alpha_1+\alpha_2$ in $\mathbb X$ is
	$x_1^kx_2$,
	and the only root vector candidate of degree $k\alpha_1+\alpha_2$ is
	$[x_1^kx_2]$. Since $\NA (V)$ is $\ndN_0^n$-graded,
	$[x_1^kx_2]$ is not a root vector if and only if $[x_1^kx_2]=0$ in $\NA (V)$.
        In our setting, the latter can be characterized in terms of the matrix
        $(q_{ij})_{i,j\in I}$ using Rossos Lemma \cite[Lemma~14]{a-Rosso98}:
  For any $k\ge 0$,
   \begin{align*}
    [x_1^{k+1}x_2]=0\quad \Leftrightarrow \quad & (k+1)_{q_{11}}^!
    \prod_{i=0}^k(1-q_{11}^iq_{12}q_{21})=0.
  \end{align*}

	The Lyndon words of degree $k\alpha_1+2\alpha_2$ in $\mathbb X$ are
	the words	$x_1^{k_1}x_2x_1^{k_2}x_2$ with $k_1,k_2\in \ndN_0$, $k_1+k_2=k$,
	$k_1>k_2$. The elements $[x_1^{k_1}x_2x_1^{k_2}x_2]$
	are the only root vector candidates of degree $k\alpha_1+2\alpha_2$, except
	when $k$ is even and $q_{11}^{k^2/4}(q_{12}q_{21})^{k/2}q_{22}=-1$.
	In the latter
	case, $[x_1^{k/2}x_2]^2$ is the only additional root vector candidate of
	degree $k\alpha_1+2\alpha_2$. The definition implies that the element
	$[x_1^{k_1}x_2x_1^{k_2}x_2]$ with $k_1+k_2=k$, $k_1\ge k_2$,
	is not a root vector if and only if there exists a relation in $\NA (V)$
	of the form
	$$ \sum_{i=k_2}^{k_1}\lambda_i [x_1^ix_2][x_1^{k-i}x_2]=0 $$
	such that $\lambda _i\in \fie $ for any $k_2\le i\le k_1$ and
	$$\lambda_{k_1}=1,\quad
	  \lambda_{k_2}=-\chi(k_1\alpha_1+\alpha_2,k_2\alpha_1+\alpha_2).
	$$
	(This is also true if $k_1=k_2$!)
\end{exam}

Note that the definitions of a root vector candidate and a root vector depend
on the bicharacter $\chi $.
Now Kharchenko's theorem can be restated as follows.

\begin{theo}
	Let $L\subseteq \mathbb X^\times$ such that $w\in L$ if and only if $[w]$ is a
	root vector.
  Then the elements
  \begin{align*}
    [v_k]^{m_k}\cdots [v_1]^{m_1},\quad &
    \text{$k\in \ndN _0$, $v_1,\dots,v_k\in L$, $v_1\lex v_2\lex \cdots \lex v_k$,}\\
		& \text{$0< m_i<\min (O_{\deg v_i}\setminus \{1\})$ for any $i$,}
  \end{align*}
  form a vector space basis of $\NA (V)$.
\end{theo}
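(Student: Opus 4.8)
The plan is to deduce this restatement directly from Kharchenko's theorem by re\-indexing its PBW basis. Write $(L_{0},h)$ for the set of Lyndon words and the height function provided by \cite{khar}, which are uniquely determined. Since $[v^{k}]=[v]^{k}$ by the definition of super-letters and since $\NA(V)$ is $\ndZ^{n}$-graded with finite-dimensional homogeneous components, it suffices to exhibit a degree-preserving bijection between the monomials listed in the present statement and the Kharchenko monomials $[v_{k}]^{m_{k}}\cdots[v_{1}]^{m_{1}}$ (with $v_{i}\in L_{0}$, $0<m_{i}<h(v_{i})$) under which the two products agree as elements of $\NA(V)$; the existence of such a bijection exhibits the listed monomials as the Kharchenko basis written differently. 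The whole argument then reduces to understanding which root vector candidates are root vectors.

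The key step, which I expect to be the main obstacle, is the following characterization: for a Lyndon word $v$ and an exponent $k\in O_{\deg v}\setminus\{\infty\}$, the candidate $[v^{k}]$ is a root vector if and only if $v\in L_{0}$ and $k<h(v)$. For $k=1$ this is the statement that the hard super-letters are exactly the Kharchenko generators, which I would extract from Remark~\ref{re:rvcrit} together with the standard identification of $L_{0}$ with the Lyndon words $v$ for which $[v]$ is not a linear combination of candidate products built from strictly larger super-letters. For the general case I would argue through the straightening relations of $\NA(V)$: as illustrated in Example~\ref{ex:Lyndon2}, a reducible super-letter power is rewritten, within a fixed degree, as a combination of candidate products all of whose factors lie lexicographically above the given word. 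The fact I would need to establish here is that the subspace spanned by candidate products with bases lexicographically above $v^{k}$ is already spanned by PBW monomials whose factors are all lexicographically above $v^{k}$; granting this, such a subspace cannot contain the PBW basis element $[v]^{k}$ when $k<h(v)$, since its only factor is $[v]$ with $v\lex v^{k}$, which gives the forward direction. Conversely, if $k\ge h(v)$ the height relation for $[v]^{h(v)}$ (or the vanishing of $[v]^{k}$) rewrites $[v^{k}]$ through strictly larger candidates, and if $v\notin L_{0}$ then already $[v]$, hence every $[v^{k}]$, is so reducible; in both cases $[v^{k}]$ fails to be a root vector.

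Granting this characterization, I would assemble the bijection as follows. Fix $v\in L_{0}$ of finite height; by Kharchenko's theorem $h(v)=dp^{s}$ where $d=o_{\deg v}$, $p=\mathrm{char}(\fie)$ and $s\in\ndN_{0}$, with $s=0$ in characteristic $0$. The characterization shows that the root vectors with base $v$ are exactly $v,\,v^{d},\,v^{dp},\dots,v^{dp^{s-1}}$. A direct computation of orders gives $\chi(\deg v^{dp^{j}},\deg v^{dp^{j}})=1$ for all $j\ge 0$, so the uniform exponent bounds of the present statement are $\min(O_{\deg v}\setminus\{1\})=d$ for the base $v$ and $p$ for each higher power. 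A mixed-radix (base $p$) count then shows that the products $[v^{dp^{s-1}}]^{n_{s}}\cdots[v^{d}]^{n_{1}}[v]^{n_{0}}$, with $0\le n_{0}<d$ and $0\le n_{j}<p$, equal $[v]^{M}$ where $M=n_{0}+d(n_{1}+pn_{2}+\cdots+p^{s-1}n_{s})$ runs bijectively over $\{0,\dots,dp^{s}-1\}$. Using the elementary fact that powers of distinct Lyndon words do not interleave in the lexicographic order, so that in each monomial of the present statement the factors sharing a common base occur consecutively, every such monomial collapses to a correctly ordered Kharchenko monomial $\prod_{v\in L_{0}}[v]^{M_{v}}$, and this collapse is a bijection onto the Kharchenko basis. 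Since the two monomials are equal as elements of $\NA(V)$, the monomials in the present statement form a basis; the characteristic-$0$ case is immediate, as then $s=0$ and $\min(O_{\deg v}\setminus\{1\})=h(v)$ for every $v\in L_{0}$.
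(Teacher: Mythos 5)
The paper itself gives no proof of this theorem: it is offered as a restatement of Kharchenko's theorem, with the real substance residing in the structural results of \cite{khar}, so your proposal has to be judged on its own merits. Your overall strategy is the natural one and matches the intent of the restatement: characterize the root vectors as exactly the candidates $[v^k]$ with $v\in L_0$, $k\in O_{\deg v}\setminus\{\infty\}$, $k<h(v)$, and then re-index Kharchenko's basis via the mixed-radix decomposition of exponents. The re-indexing half is essentially correct, including the computation that $\chi(\deg v^{dp^j},\deg v^{dp^j})=1$ forces the exponent bound $p$ for the higher generators, and the use of the fact that powers of distinct Lyndon words compare lexicographically as the words themselves.

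The genuine gap is in your key characterization, which you leave resting on a claim you explicitly grant rather than prove, namely that the span of candidate products with all bases above $v^k$ is contained in the span of PBW monomials whose factors are all above $v^k$; moreover, as stated this claim is false. Indeed, a candidate $[v^a]$ with $a\in O_{\deg v}$, $k<a<h(v)$ (such candidates are nonzero e.g.\ whenever $\mathrm{char}(\fie)=p>0$ and $h(v)=o_{\deg v}p^s$ with $s\ge 1$) is itself a one-factor product of candidates above $v^k$, yet it equals the PBW basis element $[v]^a$ whose only factor $v$ satisfies $v\lex v^k$; by linear independence of the PBW basis it cannot lie in the span of monomials with all factors above $v^k$. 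The claim can be repaired by restricting to the homogeneous component of degree $k\deg v$, which by degree reasons excludes powers of $v$ and, via non-interleaving, forces every factor occurring to be a power of a Lyndon word above $v^k$; but even the repaired claim is not a consequence of the bare PBW statement quoted in the paper together with Remark~\ref{re:rvcrit}. One needs Kharchenko's straightening results: that a product of super-letters, rewritten in the PBW basis, involves only factors bounded below by the smallest super-letter present, and that the height relation expresses $[v]^{h(v)}$ through strictly greater super-letters. The same machinery is missing in your converse direction: after substituting the expansion of $[v]$ into $[v]^k$ when $v\notin L_0$, or after invoking the height relation when $k\ge h(v)$, you obtain products of candidates in arbitrary order, and reordering them into \emph{decreasing} products of candidates above $v^k$ is precisely the straightening step you never supply. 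Until those results from \cite{khar} are invoked or reproved, the proof is incomplete; with them, your plan does go through.
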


(Note that $\min (O_{\alpha }\setminus \{1\})$ for $\alpha \in \ndZ^n$ equals
$o_\alpha$, except when $\alpha=1$.)
This reformulation of Kharchenko's theorem allows to define the set
$$\boldsymbol{\Delta}_+ = \{\deg (u)\mid u\in L\}$$
of \textbf{positive roots of} $\NA (V)$ and the \textbf{root system}
$\boldsymbol{\Delta}=\boldsymbol{\Delta}_+ \cup -\boldsymbol{\Delta}_+$ of
$\NA(V)$, see \cite{H2006}.
It turns out that this definition is independent of choices.
For any $\alpha \in \boldsymbol{\Delta}_+$, the number of elements $u\in L$
with $\deg(u)=\alpha$ is called the \textbf{multiplicity of} $\alpha $.

One of the biggest open problems in the theory of Nichols algebras of diagonal
type is to determine for any $V$ (in a suitable class) the set $L$ in
Kharchenko's theorem. In this paper we determine for $n=2$ the subset of $L$ of
elements of degree $m\alpha_1+2\alpha_2$, where $m\in \ndN $.

Next we recall standard tools for working with Nichols algebras of diagonal type.

For any $i \in \{1,\ldots,n\}$, there exists a unique skew-derivation $d_i$
of the tensor algebra $T(V)$ such that
\begin{align} \label{eq:dix}
 d_i(x_j)=\delta_{ij},\quad
 d_i(xy)=d_i(x)y+\chi(\alpha,\alpha_i)xd_i(y)
\end{align}
for any $j\in I$ and $x,y\in T(V)$ with $\deg (x)=\alpha $.
These skew-derivations induce skew-derivations of $\NA (V)$ which will be
denoted by the same symbols.

\begin{rema} \label{re:kerd}
	An element $x\in \NA (V)$ is constant if and only if $d_i(x)=0$ in $\NA (V)$
	for any $i\in I$. In particular, a homogeneous element $x\in \NA (V)$ of
	non-zero degree is zero if and only if $d_i(x)=0$ in $\NA (V)$ for any $i\in
	I$. Because of this, the skew-derivations $d_i$, $i\in I$, and their relatives
	belong to the main tools in the study of
	Nichols algebras of diagonal type.
\end{rema}

Since $\NA (V)$ is an $\ndN_0^n$-graded coalgebra,
for any $x\in \NA (V)$ and any $\beta,\gamma\in \ndN_0^n$
there exist uniquely determined elements
$$x_{\beta,\gamma }\in \NA(V)(\beta)\otimes \NA (V)(\gamma ),$$
such that $\Delta (x)=\sum_{\beta,\gamma\in \ndN_0^n}x_{\beta,\gamma}$.
For any $\beta,\gamma \in \ndN_0^n$, the map
$$\NA (V)\to \NA(V)(\beta)\otimes \NA(V)(\gamma),\quad x\mapsto
x_{\beta,\gamma},$$
is linear. The skew-derivations $d_i$ with $i\in I$ are closely related to these
maps:
\begin{align} \label{eq:Deltapart}
  \Delta_{\alpha_i,\alpha-\alpha_i}(x)=x_i\otimes d_i(x)
\end{align}
for any $i\in I$, $\alpha \in \ndN_0^n$, and any
homogeneous element $x\in \NA (V)$ of degree $\alpha $.

%Other important tools when working with Nichols algebras are the  Dynkin
%diagramm and reflections. They also can be introduced in general.

%Let $q=q_{11}$, $r=q_{12}q_{21}$, $s=q_{22}$. Then the {\bf Dynkin diagram} of $V$ is
%\begin{center}
%	\begin{tikzpicture}[scale=.3]
%		\coordinate[label={[yshift=1pt]$q$}] (A) at (0,0);
%		\coordinate[label={[yshift=3pt]$r$}] (B) at (2,0);
%		\coordinate[label={[yshift=3pt]$s$}] (C) at (4,0);
%	\draw (A)--(C);
%	\fill (A) circle (8pt);
%	\fill (C) circle (8pt);
%\end{tikzpicture}
%\end{center}

Next we discuss reflections. Let $i\in I$. Assume that
for any $j\in I\setminus \{i\}$ there
exists $k\in\ndN_0$ such that $(k+1)_{q_{ii}}(1-q_{ii}^kq_{ij}q_{ji})=0$.
Following \cite{H2006}, we set $c_{ii}=2$ and for any $j\in I\setminus \{i\}$ we
define
$$c_{ij} = -\min\{k\in\ndN_0 \mid (k+1)_{q_{ii}}(1-q_{ii}^kq_{ij}q_{ji})=0\}.$$
Let $s_i\in\mathrm{GL}(\mathbb{Z}^n)$ be given by
$s_i(\alpha_j)=\alpha_j - c_{ij}\alpha_i$ for any $j\in I$. The {\bf reflection
of} $V$ {\bf on the} $i${\bf -th vertex} is the braided vector space $R_i(V)$ with
basis $y_1,\dots,y_n$ such that for all $j,k\in I$,
$$ c(y_j\otimes y_k) = q'_{jk} y_k\otimes y_j,$$
where $q'_{jk} = \chi(s_i(\alpha_j),s_i(\alpha_k)) =
q_{jk}q_{ik}^{-c_{ij}}q_{ji}^{-c_{ik}}q_{ii}^{c_{ij}c_{ik}}$.

In the remaining part of the paper we restrict our attention to the case $n=2$.
Let $q=q_{11}$, $r=q_{12}q_{21}$, and $s=q_{22}$.
We introduce special elements of $\NA (V)$ and spell out some easy properties of
them.

For all $k\in \ndN_0$ we define inductively $u_k\in T(V)$ by
$$ u_0=x_2,\quad u_k=x_1u_{k-1}-q^{k-1}q_{12}u_{k-1}x_1$$
for $k\ge 1$. Note that then $u_k=[x_1^kx_2]$ for any $k\in \ndN_0$.
Let $\ad $ denote the adjoint action of $T(V)$ on itself.
Then
$$ \ad x_1(y)=x_1y-\chi(\alpha_1,\deg(y))yx_1 $$
for any homogeneous element $y\in T(V)$.
In particular,
\begin{align} \label{eq:adx1u}
 \ad x_1(u_k)=u_{k+1}
\end{align}
for any $k\in \ndN_0$.

\begin{lemm} \label{le:d1ad}
  Let $m\in \ndN_0$. Then
\begin{align*}
  d_1\big( (\ad x_1)^m(y)\big)
  =&\,q^m(\ad x_1)^m(d_1(y))\\
  &\,+(m)_q(1-q^{m-1}\chi(\alpha_1,\alpha)\chi(\alpha,\alpha_1))
  (\ad x_1)^{m-1}(y)
\end{align*}
for any homogeneous element $y\in T(V)$ of degree $\alpha \in \ndN_0^n$.
\end{lemm}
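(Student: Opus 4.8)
The plan is to prove the formula by induction on $m$, which is natural because both the exponent $q^m$ and the coefficient $(m)_q$ strongly suggest a recursive structure tied to the definition $(\ad x_1)^m(y) = \ad x_1\big((\ad x_1)^{m-1}(y)\big)$. The base case $m=0$ is immediate since $(\ad x_1)^0(y)=y$ and the second summand vanishes (it carries a factor $(0)_q=0$), so $d_1(y)=q^0 d_1(y)$ holds trivially. For the inductive step I would assume the formula for $m-1$ and compute $d_1\big((\ad x_1)^m(y)\big) = d_1\big(\ad x_1(z)\big)$ where $z=(\ad x_1)^{m-1}(y)$.

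First I would establish an $m=1$ identity as the essential building block: a formula for $d_1(\ad x_1(z))$ in terms of $d_1(z)$ and $z$ itself. Using $\ad x_1(z) = x_1 z - \chi(\alpha_1,\beta) z x_1$ for $z$ homogeneous of degree $\beta$, together with the skew-derivation rule \eqref{eq:dix}, namely $d_1(ab)=d_1(a)b+\chi(\deg a,\alpha_1)a\,d_1(b)$ and $d_1(x_1)=1$, I would expand both terms. The $x_1 z$ term contributes $z + \chi(\alpha_1,\alpha_1)x_1 d_1(z) = z + q\,x_1 d_1(z)$, and the $-\chi(\alpha_1,\beta)zx_1$ term contributes $-\chi(\alpha_1,\beta)\big(d_1(z)x_1 + \chi(\beta,\alpha_1)z\big)$. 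Collecting the bracket-type terms into $\ad x_1(d_1(z))$ and the bare-$z$ terms together, I expect to obtain precisely
\begin{align*}
 d_1(\ad x_1(z)) = q\,\ad x_1(d_1(z)) + \big(1-\chi(\alpha_1,\beta)\chi(\beta,\alpha_1)\big)z,
\end{align*}
after accounting for the degree shift (note $d_1(z)$ has degree $\beta-\alpha_1$, so the scalar in $\ad x_1(d_1(z))$ is $\chi(\alpha_1,\beta-\alpha_1)=q^{-1}\chi(\alpha_1,\beta)$, which combines correctly with the $q$ prefactor).

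Then I would apply this $m=1$ identity with $z=(\ad x_1)^{m-1}(y)$, whose degree is $\beta = \alpha+(m-1)\alpha_1$. Substituting the inductive hypothesis for $d_1(z)=d_1((\ad x_1)^{m-1}(y))$ into the $\ad x_1(d_1(z))$ term, and using $\chi(\alpha_1,\beta)\chi(\beta,\alpha_1) = q^{2(m-1)}\chi(\alpha_1,\alpha)\chi(\alpha,\alpha_1)$ for the bare-$z$ coefficient, the whole expression should reorganize into the claimed form. The main obstacle I anticipate is purely bookkeeping: correctly tracking the bicharacter powers of $q$ coming from the repeated degree shifts (each application of $\ad x_1$ raises the degree by $\alpha_1$), and verifying that the two bare-term contributions — one from the $m=1$ identity applied at level $m$, and one propagated through $q\,\ad x_1$ acting on the inductive second summand — combine via the Gaussian recursion $(m)_q = 1 + q(m-1)_q$ to yield exactly the coefficient $(m)_q(1-q^{m-1}\chi(\alpha_1,\alpha)\chi(\alpha,\alpha_1))$. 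I would verify this coefficient merge carefully, since it is the one place where an off-by-one in the $q$-exponent would break the identity.
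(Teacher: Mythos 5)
Your proposal is correct and follows essentially the same route as the paper: the paper also first derives the $m=1$ identity $d_1(\ad x_1(z))=(1-\chi(\alpha_1,\beta)\chi(\beta,\alpha_1))z+q\,\ad x_1(d_1(z))$ from Equation~\eqref{eq:dix}, then inducts on $m$ by applying it to $z=(\ad x_1)^{m-1}(y)$ with $\beta=\alpha+(m-1)\alpha_1$ and merging the two bare-term coefficients exactly as you describe. The coefficient merge you flag as the delicate point indeed works, using both $(m)_q=1+q(m-1)_q$ and $(m)_q=(m-1)_q+q^{m-1}$.
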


\begin{proof}
Let $y\in T(V)$ be a homogeneous element of degree $\alpha \in \ndN_0^n$.
Then
\begin{align*}
 d_1(\ad x_1(y))=&\,d_1(x_1y-\chi(\alpha_1,\alpha)yx_1)\\
=&\,y+qx_1d_1(y)-\chi(\alpha_1,\alpha)(d_1(y)x_1+\chi(\alpha,\alpha_1)y)\\
=&\,(1-\chi(\alpha_1,\alpha)\chi(\alpha,\alpha_1))y+q\ad x_1(d_1(y)).
\end{align*}

Now we prove the lemma by induction on $m$. For $m=0$ the claim is trivial.
Let now $m\in \ndN $ and assume that the claim holds for $m-1$.
Let $y\in T(V)$ be a homogeneous element of degree $\alpha \in \ndN_0$
and let $\beta=\alpha+(m-1)\alpha_1$, $q_\alpha=\chi(\alpha_1,\alpha)\chi(\alpha,\alpha_1)$.
Then, by using the above formula, we conclude that
\begin{align*}
 d_1\big((\ad x_1)^m(y)\big)=&\,
 q\ad x_1\big(d_1\big( (\ad x_1)^{m-1}(y)\big)\big)\\
 &\,+(1-\chi(\alpha_1,\beta )\chi(\beta,\alpha_1))(\ad x_1)^{m-1}(y)\\
 =&\,q\ad x_1\big( q^{m-1}(\ad x_1)^{m-1}(d_1(y))\big)\\
 &\,+q\ad x_1\big( (m-1)_q(1-q^{m-2}q_\alpha )(\ad x_1)^{m-2}(y)\big)\\
 &\,+(1-q^{2m-2}q_\alpha)(\ad x_1)^{m-1}(y)
\end{align*}
because of the induction hypothesis. {}From this one obtains the claim
for $m$.
\end{proof}

\begin{rema} \label{re:Deltau}
(1) It is well-known that
\begin{align}
 \label{eq:Deltau}
	\Delta(u_k)=u_k\otimes 1+\sum_{i=0}^k {k \choose i}_{\!\!q}\prod_{j=k-i}^{k-1}(1-q^jr)
  x_1^i\otimes u_{k-i}
\end{align}
for any $k\in \ndN_0$.
Hence from Equation~\eqref{eq:Deltapart} we conclude that
\begin{align}\label{eq:di}
  d_1(u_k)=(k)_q(1-q^{k-1}r)u_{k-1},\quad d_2(u_k)=\delta_{k0}1
\end{align}
for any $k\in \ndN_0$.

(2) For all $m\in \ndN_0$ let
$$b_m=\prod_{j=0}^{m-1}(1-q^jr).$$
In particular, $b_0=1$. Then Remark~\ref{re:kerd} implies that
$u_k=0$ in $\NA (V)$ if and only if $(k)_q^!b_k=0$.
\end{rema}

Later on, often a normalization of $u_k$, $k\in \ndN_0$, will be very useful.
For all $k\in \ndN _0$ let
\begin{align*}
  \dpu_k=\begin{cases} \frac{1}{(k)_q^!b_k}u_k & \text{if $(k)_q^!b_k\ne 0$,}\\
	0 & \text{otherwise.}
\end{cases}
\end{align*}
The following equations for $k\in \ndN_0$ with $(k)_q^!b_k\ne 0$
follow directly from the analogous formulas for $u_k$.
\begin{align} \label{eq:Deltadpu}
	\Delta(\dpu_k)=&\,\dpu_k\otimes 1+\sum_{i=0}^k\frac{x_1^i}{(i)_q^!}\otimes
	\dpu_{k-i},\\
	\label{eq:ddpu}
	d_1(\dpu_k)=&\,\dpu_{k-1},\quad d_2(\dpu_k)=\delta_{k0}1,\\
	\label{eq:addpu}
  \ad x_1(\dpu_k)=&\,(k+1)_q(1-q^kr)\dpu_{k+1},
\end{align}
where $\dpu_{-1}=0$ in \eqref{eq:ddpu}.

We end the section with a lemma.

\begin{lemm} \label{le:dZ}
	Let $k\in \ndN_0$ such that $(k)_q^!b_k\ne 0$ and let
	$\lambda_0,\dots,\lambda_k\in \fie $. Let
	$Z=\sum_{i=0}^k\lambda_i(-q_{21})^i\dpu_i\dpu_{k-i}$ in $T(V)$. Then
	\begin{align*}
	  d_1(Z)=&-q_{21}\sum_{i=0}^{k-1}(\lambda_{i+1}-q^i\lambda_i)(-q_{21})^i
	  \dpu_i\dpu_{k-1-i},\\
		d_2(Z)=&(\lambda_0+(-r)^ks\lambda_k)\dpu_k.
	\end{align*}
\end{lemm}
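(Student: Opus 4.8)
The plan is to compute $d_1(Z)$ and $d_2(Z)$ directly from the definition of $Z$, treating each summand $\dpu_i\dpu_{k-i}$ separately and then reorganizing the resulting sums. The two ingredients I need are the skew-derivation rule \eqref{eq:dix}, namely $d_\ell(xy)=d_\ell(x)y+\chi(\deg x,\alpha_\ell)\,x\,d_\ell(y)$, together with the explicit formulas \eqref{eq:ddpu} for $d_1(\dpu_j)$ and $d_2(\dpu_j)$. Since each factor $\dpu_j$ has degree $j\alpha_1+\alpha_2$, the only values of the bicharacter that enter are $\chi(j\alpha_1+\alpha_2,\alpha_1)=q^jq_{21}$ and $\chi(j\alpha_1+\alpha_2,\alpha_2)=q_{12}^js$. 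Note that all the needed formulas, in particular \eqref{eq:ddpu}, already hold in $T(V)$ (for instance $d_1(u_k)$ can be obtained from Lemma~\ref{le:d1ad} applied to $y=x_2$), so the whole computation may be carried out in $T(V)$, where $Z$ lives.

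For $d_1$, applying the product rule to $\dpu_i\dpu_{k-i}$ and using $d_1(\dpu_j)=\dpu_{j-1}$ gives $d_1(\dpu_i\dpu_{k-i})=\dpu_{i-1}\dpu_{k-i}+q^iq_{21}\dpu_i\dpu_{k-1-i}$. Summing against $\lambda_i(-q_{21})^i$ produces two sums. In the first sum the $i=0$ term drops out because $\dpu_{-1}=0$, and reindexing $i\mapsto i+1$ turns it into $-q_{21}\sum_{i=0}^{k-1}\lambda_{i+1}(-q_{21})^i\dpu_i\dpu_{k-1-i}$. In the second sum the $i=k$ term drops out for the same reason, and factoring $-q_{21}$ out of $\lambda_i(-q_{21})^iq^iq_{21}$ gives $-q_{21}\sum_{i=0}^{k-1}(-q^i\lambda_i)(-q_{21})^i\dpu_i\dpu_{k-1-i}$. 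Adding the two sums yields the stated expression for $d_1(Z)$.

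For $d_2$, the product rule with $d_2(\dpu_j)=\delta_{j0}1$ shows that $d_2(\dpu_i\dpu_{k-i})=\delta_{i0}\dpu_k+\chi(i\alpha_1+\alpha_2,\alpha_2)\,\delta_{ik}\dpu_k$, so only the boundary indices $i=0$ and $i=k$ contribute. The $i=0$ term contributes $\lambda_0\dpu_k$, while the $i=k$ term contributes $\lambda_k(-q_{21})^kq_{12}^ks\,\dpu_k$; using $(-q_{21})^kq_{12}^k=(-q_{12}q_{21})^k=(-r)^k$ this equals $(-r)^ks\lambda_k\dpu_k$. Summing gives $d_2(Z)=(\lambda_0+(-r)^ks\lambda_k)\dpu_k$.

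I expect no genuine obstacle: once the Leibniz rule and \eqref{eq:ddpu} are in place the computation is mechanical. The only points requiring care are the two reindexings in the $d_1$ computation and the vanishing of the boundary terms forced by $\dpu_{-1}=0$, which must be tracked so that the summation ranges collapse correctly to $0\le i\le k-1$; the sign and power bookkeeping with the factors $(-q_{21})^i$ and the identity $(-q_{21})^kq_{12}^k=(-r)^k$ are the other places where a slip is easy but harmless.
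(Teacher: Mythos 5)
Your proposal is correct and follows exactly the paper's route: the paper's proof is the one-line remark that the lemma ``follows directly from Equations~\eqref{eq:dix} and \eqref{eq:ddpu},'' and your computation is precisely that argument carried out in full, with the reindexing, the vanishing boundary terms from $\dpu_{-1}=0$, and the identity $(-q_{21})^kq_{12}^k=(-r)^k$ all handled correctly. Your observation that the formulas \eqref{eq:ddpu} already hold in $T(V)$ (e.g.\ via Lemma~\ref{le:d1ad} with $y=x_2$), so the identity is genuinely one in $T(V)$, is a valid and slightly more careful reading than the paper makes explicit.
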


\begin{proof}
	This follows directly from Equations~\eqref{eq:dix} and \eqref{eq:ddpu}.
\end{proof}

\section{Multiplicities}
\label{se:multi}

We use the notation from the previous section.
In this section we determine for all $m\in \ndN_0$
the set of root vectors of $\NA (V)$ of degree
$m\alpha_1+2\alpha_2$.

\begin{lemm} \label{le:u_m^2}
	Let $m\in \ndN $ such that $u_m\ne 0$ in $\NA (V)$.
	Then $u_m^2=0$ in $\NA (V)$ if and only if $q^{m^2}r^ms=-1$ and
	$u_{m+1}=0$ in $\NA (V)$.
\end{lemm}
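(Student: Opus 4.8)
The plan is to use the criterion from Remark~\ref{re:kerd}: a homogeneous element of non-zero degree is zero in $\NA(V)$ if and only if both skew-derivations $d_1$ and $d_2$ annihilate it. So I would compute $d_1(u_m^2)$ and $d_2(u_m^2)$ and determine precisely when both vanish, given that $u_m\neq 0$.

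First I would compute the derivatives of $u_m^2$. Using the twisted Leibniz rule \eqref{eq:dix} together with the formulas \eqref{eq:di}, namely $d_1(u_m)=(m)_q(1-q^{m-1}r)u_{m-1}$ and $d_2(u_m)=\delta_{m0}1$, I get for $m\geq 1$ that $d_2(u_m)=0$, so $d_2(u_m^2)=0$ automatically. For $d_1$, applying \eqref{eq:dix} to the product $u_mu_m$ yields
\begin{align*}
  d_1(u_m^2)=d_1(u_m)u_m+\chi(m\alpha_1+\alpha_2,\alpha_1)\,u_md_1(u_m)
  =(m)_q(1-q^{m-1}r)\big(u_{m-1}u_m+\mu\,u_mu_{m-1}\big),
\end{align*}
where $\mu=\chi(m\alpha_1+\alpha_2,\alpha_1)=q^mq_{21}$. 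The key observation is that since $u_m\neq 0$ we have $(m)_q(1-q^{m-1}r)\neq 0$ (this factor is a divisor of $(m)_q^!b_m$, which is nonzero by Remark~\ref{re:Deltau}(2)), so $d_1(u_m^2)=0$ is equivalent to the vanishing in $\NA(V)$ of the element $u_{m-1}u_m+\mu\,u_mu_{m-1}$.

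Next I would identify this bracket-type expression with a super-letter or a known root-vector combination of degree $(2m-1)\alpha_1+2\alpha_2$. The natural guess is that $u_{m-1}u_m+\mu\,u_mu_{m-1}$ is, up to a nonzero scalar, the derivative image $d_1(u_{m+1}$-related element$)$ or is directly tied to $u_{m+1}$ via the adjoint action \eqref{eq:adx1u}, \eqref{eq:addpu}. Concretely, I expect to show that $u_{m-1}u_m+\mu\,u_mu_{m-1}$ equals a nonzero scalar multiple of $d_1(u_{m+1})$ or of a super-letter $[x_1^{m}x_2x_1^{m-1}x_2]$-type element whose vanishing is controlled by $u_{m+1}$. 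Using Remark~\ref{re:kerd} one more time, vanishing of this degree-$(2m-1)\alpha_1+2\alpha_2$ element should reduce, via its own $d_1,d_2$ derivatives, to the two scalar conditions $q^{m^2}r^ms=-1$ and $u_{m+1}=0$ in $\NA(V)$. The condition $q^{m^2}r^ms=-1$ is exactly the self-braiding condition $\chi(m\alpha_1+\alpha_2,m\alpha_1+\alpha_2)=-1$ that makes $[x_1^mx_2]^2$ a root vector candidate (cf.\ Example~\ref{ex:Lyndon2}), which is the expected appearance of this quantity.

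The main obstacle will be the identification step: rewriting $u_{m-1}u_m+\mu\,u_mu_{m-1}$ in a form from which both scalar conditions emerge cleanly. I anticipate that the efficient route is to pass to the normalized elements $\dpu_k$ and invoke Lemma~\ref{le:dZ} with a suitable choice of $\lambda_0,\dots,\lambda_k$ (taking $k=2m-1$ or comparing to the $Z$ of that lemma): the formula $d_2(Z)=(\lambda_0+(-r)^ks\lambda_k)\dpu_k$ is precisely where the factor $q^{\cdots}r^{\cdots}s+1$ (hence the condition $q^{m^2}r^ms=-1$) should surface, while $d_1(Z)$ encodes the recursion linking consecutive $\dpu_i$ and ties the residual vanishing to $u_{m+1}$. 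Checking that the bracket matches such a $Z$ up to the nonzero normalization factors, and that no spurious extra conditions appear, is the delicate bookkeeping I would carry out carefully, keeping track of the powers of $q$ and $q_{21}$ throughout.
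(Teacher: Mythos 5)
Your opening reduction is correct: for $m\ge 1$ one has $d_2(u_m^2)=0$ automatically, and $d_1(u_m^2)=(m)_q(1-q^{m-1}r)\,w$ with $w=u_{m-1}u_m+q^mq_{21}u_mu_{m-1}$, where the scalar is nonzero because $u_m\ne 0$; so by Remark~\ref{re:kerd}, $u_m^2=0$ if and only if $w=0$ in $\NA (V)$. The genuine gap is that deciding when $w=0$ \emph{is} the whole content of the lemma, and the tools you propose for it do not work. The guess $w\sim d_1(u_{m+1})$ is impossible on degree grounds: $d_1(u_{m+1})$ is a multiple of $u_m$, of degree $m\alpha_1+\alpha_2$, whereas $w$ has degree $(2m-1)\alpha_1+2\alpha_2$. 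The appeal to Lemma~\ref{le:dZ} also fails to produce the condition $q^{m^2}r^ms=-1$ where you expect it: writing $w$ (normalized) as $Z=\sum_i\lambda_i(-q_{21})^i\dpu_i\dpu_{2m-1-i}$, the only nonzero coefficients are $\lambda_{m-1},\lambda_m$, so for $m\ge 2$ one has $\lambda_0=\lambda_{2m-1}=0$ and hence $d_2(Z)=0$ identically---no condition on $s$ surfaces at this stage. (Moreover Lemma~\ref{le:dZ} formally requires $(2m-1)_q^!b_{2m-1}\ne 0$, which need not hold here, since $u_{m+1}=0$ is one of the very conditions in play.) The $s$-condition is buried $m-1$ derivation steps deeper: one must iterate the recursion $\mu_i=\lambda_{i+1}-q^i\lambda_i$ until the element reaches degree $m\alpha_1+2\alpha_2$, where a $\dpu_0\dpu_m$- and a $\dpu_m\dpu_0$-term first appear, and solving that $q$-difference recursion is essentially the Gaussian-binomial machinery (Lemma~\ref{le:binomsum}, the elements $\su(k,t)$) that the paper builds for the much harder Theorem~\ref{main}---not bookkeeping. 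Worse, your plan says nothing workable about the ``if'' direction: to get $w=0$ from $q^{m^2}r^ms=-1$ and $u_{m+1}=0$ via Remark~\ref{re:kerd}, you would need $d_1(w)=0$ in $\NA (V)$, hence the vanishing of \emph{its} derivative, and so on---an unbounded descent through elements of the spaces $U_n$, whose vanishing in $\NA(V)$ is exactly the question answered (only in characteristic $0$!) by Theorem~\ref{main}; as proposed, your argument is either circular or vastly harder than the statement, while the lemma itself holds over any field.

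The paper's proof sidesteps all of this by using coproduct components other than those encoded in $d_1,d_2$. From Equation~\eqref{eq:Deltau} one gets
\begin{align*}
\Delta_{\beta_m,\beta_m}(u_m^2)=(1+q^{m^2}r^ms)\,u_m\otimes u_m ,
\end{align*}
so the $s$-condition appears instantly in the \emph{middle} component, and $\Delta_{\beta_{m+1},\beta_{m-1}}(u_m^2)$ is a nonzero multiple of $u_{m+1}\otimes u_{m-1}$, which forces $u_{m+1}=0$. For the converse, $u_{m+1}=0$ gives the relation $x_1u_m=q^mq_{12}u_mx_1$ in $\NA (V)$, from which the components of $\Delta(u_m^2)$ with right-hand degree $\alpha_1$ and $\alpha_2$ are shown to vanish; since $\NA (V)$ is strictly graded, $u_m^2=0$ follows. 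To salvage your plan you would have to import these right-hand and middle components (i.e., work with $\Delta_{\beta,\gamma}$ rather than only $d_1,d_2$), or else carry out the full $q$-binomial recursion explicitly in both directions.
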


\begin{proof}
	Assume first that $u_m^2=0$. For all $k\in \ndN_0$ let
	$\beta_k=k\alpha_1+\alpha_2$. Using Equation~\ref{eq:Deltau} we obtain that
	$$\Delta_{\beta_m,\beta_m}(u_m^2)=(1+q^{m^2}r^ms)u_m\otimes u_m.$$
	Since $u_m\ne 0$, we conclude that $q^{m^2}r^ms=-1$. Moreover,
	\begin{align*}
	  \Delta_{\beta_{m+1},\beta_{m-1}}&(u_m^2)\\
		=&\,
  	(m)_q(1-q^{m-1}r)(u_mx_1+q^{m(m-1)}r^{m-1}sq_{21}x_1u_m)\otimes u_{m-1}\\
		=&\,(m)_q(1-q^{m-1}r)q^{m(m-1)}r^{m-1}sq_{21}u_{m+1}\otimes u_{m-1}.
  \end{align*}
	Again, since $u_m\ne 0$, from Remark~\ref{re:Deltau}(2) it follows that
	$u_{m+1}=0$.

	Conversely, assume that $q^{m^2}r^ms=-1$ and that $u_{m+1}=0$.
	Then $x_1u_m=q^mq_{12}u_mx_1$, and hence
	\begin{align*}
		\Delta_{2\beta_m-\alpha_1,\alpha_1}(u_m^2)=&\,0,\\
		\Delta_{\beta_{2m},\alpha_2}(u_m^2)=&\,
		b_m\big( (x_1^m\ot u_0)(u_m\ot 1)+u_mx_1^m\ot u_0\big)\\
		=&\,b_m(q_{21}^msq^{m^2}q_{12}^m+1)u_mx_1^m\ot u_0\\
		=&\,0.
	\end{align*}
	Since $\NA (V)$ is a strictly graded coalgebra, it follows that $u_m^2=0$.
\end{proof}

%\begin{lemm} Let $m\in \ndN$. Assume that $u_m\ne 0$, $u_{m+1}=0$,
%and $q^{m^2}r^ms=-1$. Then $[1^m21^k2]$ is not a root vector for any $1\le k\le m$.
%\end{lemm}

%\begin{proof}
%The assumptions imply that $[1^m2]^2=0$. Thus the claim holds
%for $k=m$. We calculate
%$\Delta([1^m2]^2)$.
%Equation~\eqref{eq:Deltau} implies that
%$\Delta(u_m^2)$ is a linear combination of terms of the form
%$x_1^{2m-i-j}\otimes u_iu_j$ with $0\le i,j\le m$, up to terms in $(x_2)\otimes
%\NA (V)$. Let $k\in \{1,\dots ,m-1\}$.
%The coefficient of $x_1^{m-k}\otimes u_mu_k$ in the above sum is
%$${m\choose k}_{\!\!q}\prod_{j=k}^{m-1}(1-q^jr)(1\otimes u_m)(x_1^{m-k}\otimes
%u_k),$$
%which is non-zero since $u_m\ne 0$. We conclude that there is a relation of the
%form
%$$ \sum_{j=0}^{m-k}a_j u_{m-j}u_{k+j}=0$$
%in $\NA (V)$ such that $a_0=1$. Therefore $[1^m21^k2]$ is not a root vector.
%\end{proof}

\begin{lemm}\label{le:0.7}
	Let $k,l\in \ndN _0$ with $k>l$. Assume that $(k)_q^!b_k\ne 0$
        and that
	$[x_1^kx_2x_1^{l+1}x_2]$ is not a root vector. Then $[x_1^kx_2x_1^lx_2]$
	is not a root vector.
\end{lemm}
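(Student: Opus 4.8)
The plan is to prove the contrapositive-flavored implication directly: assuming $[x_1^kx_2x_1^{l+1}x_2]$ is not a root vector, I will produce a relation witnessing that $[x_1^kx_2x_1^lx_2]$ is not a root vector either. By the criterion spelled out at the end of Example~\ref{ex:Lyndon2}, the word $[x_1^kx_2x_1^lx_2]$ fails to be a root vector precisely when there is a relation
\begin{align*}
  \sum_{i=l}^{k}\mu_i\,[x_1^ix_2][x_1^{k+l-i}x_2]=0
\end{align*}
in $\NA(V)$ with $\mu_k=1$ and $\mu_l=-\chi(k\alpha_1+\alpha_2,l\alpha_1+\alpha_2)$; similarly, the hypothesis that $[x_1^kx_2x_1^{l+1}x_2]$ is not a root vector gives such a relation $\sum_{i=l+1}^k\lambda_i[x_1^ix_2][x_1^{k-i+l+1}x_2]=0$ of total degree $(k+l+1)\alpha_1+2\alpha_2$, normalized with $\lambda_k=1$ and $\lambda_{l+1}=-\chi(k\alpha_1+\alpha_2,(l+1)\alpha_1+\alpha_2)$.

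The strategy is to obtain the degree-$(k+l)$ relation from the degree-$(k+l+1)$ one by applying the skew-derivation $d_1$, which lowers the $\alpha_1$-degree by one. First I would rewrite everything in terms of the normalized root vectors $\dpu_i=\frac{1}{(i)_q^!b_i}u_i$, since by \eqref{eq:ddpu} we have the clean formula $d_1(\dpu_i)=\dpu_{i-1}$, and Lemma~\ref{le:dZ} computes $d_1$ of an expression of exactly the shape $\sum_i\lambda_i(-q_{21})^i\dpu_i\dpu_{k-i}$ in closed form. The condition $(k)_q^!b_k\neq0$ guarantees that $\dpu_k$ (and all $\dpu_i$ with $i\le k$) are well-defined and that passing between $u_i$ and $\dpu_i$ is reversible, which is where that hypothesis is used. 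So I would express the given degree-$(k+l+1)$ relation in normalized form $Z=\sum_{i=l+1}^k\lambda_i'(-q_{21})^i\dpu_i\dpu_{k+l+1-i}=0$, apply $d_1$, and use Lemma~\ref{le:dZ} to read off $d_1(Z)=-q_{21}\sum_i(\lambda_{i+1}'-q^i\lambda_i')(-q_{21})^i\dpu_i\dpu_{k+l-i}$, obtaining a relation of degree $(k+l)\alpha_1+2\alpha_2$.

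The key step — and the main obstacle — is verifying that the resulting relation is genuinely a \emph{nonzero} certificate of the right normalized form, i.e. that its top coefficient (at $i=k$) is nonzero so it can be rescaled to have $\mu_k=1$, and that its bottom coefficient (at $i=l$) matches the required value $-\chi(k\alpha_1+\alpha_2,l\alpha_1+\alpha_2)$ forced by Example~\ref{ex:Lyndon2}. The leading term is controlled by $\lambda_{k+1}'-q^k\lambda_k'$; since the index runs only up to $k$, the term $\lambda_{k+1}'$ is absent, so the top coefficient is essentially $-q^k\lambda_k'$, which is nonzero because $\lambda_k'$ corresponds to the normalized $\lambda_k=1$. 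The delicate part is matching the bottom coefficient: I expect this to reduce to a short identity relating $\chi(k\alpha_1+\alpha_2,(l+1)\alpha_1+\alpha_2)$ and $\chi(k\alpha_1+\alpha_2,l\alpha_1+\alpha_2)$ via the factor $q^l$ appearing in $\lambda_{l+1}'-q^l\lambda_l'$, together with the bicharacter's behavior $\chi(\beta,\alpha_1)=q^k r^{?}q_{21}$ on the relevant degrees. Concretely I would compute $\chi(k\alpha_1+\alpha_2,l\alpha_1+\alpha_2)=q^{kl}r^{\min}\cdots$ explicitly and check that the index shift produced by $d_1$ reproduces exactly the required normalization; this bookkeeping with the bicharacter is routine but is precisely where one must be careful, so I would isolate it as a self-contained computation rather than let it clutter the main argument.
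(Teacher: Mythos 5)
Your proposal follows the paper's proof in all essentials: use Example~\ref{ex:Lyndon2} to turn the hypothesis into a relation $\sum_{i=l+1}^{k}\lambda_i\dpu_i\dpu_{k+l+1-i}=0$ in $\NA(V)$ with $\lambda_k\ne 0$, apply $d_1$, and note that the coefficient of $\dpu_k\dpu_l$ in the resulting degree-$(k+l)$ relation is $q^kq_{21}\lambda_k\ne 0$ because there is no index $k+1$ to interfere. Two remarks on where you deviate from, or fall short of, the paper.

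First, the step you call the main obstacle --- matching the bottom coefficient with $-\chi(k\alpha_1+\alpha_2,l\alpha_1+\alpha_2)$ --- is only promised, not carried out, in your write-up; it does go through, and in one line rather than by delicate bookkeeping. Since the sum starts at $i=l+1$, the coefficient of $\dpu_l\dpu_k$ after applying $d_1$ is just (a unit times) $\lambda_{l+1}$; there is no ``$-q^l\lambda_l$'' contribution, contrary to what you suggest. Moreover the constants $(j)_q^!b_j(k+l-j)_q^!b_{k+l-j}$ converting $\dpu_j\dpu_{k+l-j}$ back to $u_ju_{k+l-j}$ are the same at $j=l$ and $j=k$, so they cancel in the normalized ratio, which therefore equals
\begin{align*}
\frac{-\chi\bigl(k\alpha_1+\alpha_2,(l+1)\alpha_1+\alpha_2\bigr)}{q^kq_{21}}
=-\chi(k\alpha_1+\alpha_2,l\alpha_1+\alpha_2),
\end{align*}
because $\chi(k\alpha_1+\alpha_2,\alpha_1)=q^kq_{21}$. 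That said, the paper does not perform this check at all: it concludes from the nonzero coefficient of $\dpu_k\dpu_l$ alone. This shortcut is legitimate, since a relation expressing $\dpu_k\dpu_l$ through the products $\dpu_j\dpu_{k+l-j}$ with $l\le j<k$, once rewritten in the PBW basis, would exhibit $[x_1^kx_2x_1^lx_2]$ with nonzero coefficient as a combination of other PBW monomials, contradicting Kharchenko's theorem if it were a root vector. So your ``delicate part'' is either a one-liner or dispensable; as it stands, leaving it unverified is the one real incompleteness in your argument.

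Second, a technical slip: you invoke Lemma~\ref{le:dZ} for a sum of total degree $(k+l+1)\alpha_1+2\alpha_2$, but that lemma requires $(m)_q^!b_m\ne 0$ with $m$ equal to the total $\alpha_1$-degree, here $m=k+l+1$, which is not among the hypotheses of the statement (only $(k)_q^!b_k\ne 0$ is assumed, and $b_{k+l+1}$ may well vanish). The fix is what the paper does: compute $d_1$ term by term from \eqref{eq:dix} and \eqref{eq:ddpu}; this is valid because every $\dpu_i$ actually occurring in your $Z$ has $i\le k$, so $d_1(\dpu_i)=\dpu_{i-1}$ holds for each factor present, whereas Lemma~\ref{le:dZ} as a black box would also involve the possibly ill-behaved $\dpu_i$ with $i>k$.
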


\begin{proof}
	Lyndon words of degree $(k+l+1)\alpha_1+2\alpha_2$, which are larger than
  $x_1^kx_2x_1^{l+1}x_2$, are of the form $x_1^mx_2x_1^{k+l+1-m}x_2$ with
	$(k+l+1)/2<m<k$. Hence
	the assumption implies that there exists $(\lambda_i)_{l+1\le i\le k}\in
	\fie^{k-l}$ such that $\lambda_k=1$ and
	$\sum_{i=l+1}^k \lambda_i \dpu_i\dpu_{k+l+1-i}=0$ in $\NA (V)$.
	Then
	$$
	  d_1\Big(\sum_{i=l+1}^k \lambda_i \dpu_i\dpu_{k+l+1-i}\Big)
	  =\sum_{i=l+1}^k
	\lambda_i(\dpu_{i-1}\dpu_{k+l+1-i}+q^iq_{21}\dpu_i\dpu_{k+l-i})=0.$$
	The coefficient of $\dpu_k\dpu_l$ in the last expression is $q^kq_{21}$
	and hence the root vector candidate $[x_1^kx_2x_1^lx_2]$ is not
	a root vector.
\end{proof}

\begin{prop} \label{cor:48}
Let $k,l\in \ndN $ with $k\ge l$.
Assume that $[x_1^kx_2x_1^{l}x_2]$ is a root vector candidate but not a root
vector. Then $[x_1^{k+1}x_2x_1^{l-1}x_2]$ is not a root vector.
\end{prop}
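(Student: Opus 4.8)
The plan is to route the argument through the intermediate word $x_1^{k+1}x_2x_1^{l}x_2$, raising the given relation with $\ad x_1$ and then descending one step with Lemma~\ref{le:0.7}. Put $N=k+l$. First I would translate the hypothesis by Example~\ref{ex:Lyndon2}: since $[x_1^kx_2x_1^lx_2]$ is a root vector candidate but not a root vector, there are $\lambda_l,\dots,\lambda_k\in\fie$ with
\[
\sum_{i=l}^{k}\lambda_i\,u_i\,u_{N-i}=0\quad\text{in }\NA(V),\qquad
\lambda_k=1,\ \ \lambda_l=-\chi(k\alpha_1+\alpha_2,l\alpha_1+\alpha_2).
\]
Call this relation $R$.

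Before the main step I would dispose of a degenerate case. If $u_{k+1}=0$ in $\NA(V)$, then, because $k+1>l-1$, the Shirshov decomposition $x_1^{k+1}x_2\mid x_1^{l-1}x_2$ and the definition of super-letters give
\[
[x_1^{k+1}x_2x_1^{l-1}x_2]
=u_{k+1}u_{l-1}-\chi\big((k+1)\alpha_1+\alpha_2,(l-1)\alpha_1+\alpha_2\big)\,u_{l-1}u_{k+1}=0,
\]
which is therefore not a root vector and the claim holds. Hence I may assume $u_{k+1}\ne0$, equivalently $(k+1)_q^!b_{k+1}\ne0$ by Remark~\ref{re:Deltau}(2).

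The core step is to apply the twisted derivation $\ad x_1$ to $R$. Using that $\ad x_1$ is a braided derivation, namely $\ad x_1(xy)=\ad x_1(x)\,y+\chi(\alpha_1,\deg x)\,x\,\ad x_1(y)$, together with $\ad x_1(u_i)=u_{i+1}$ from~\eqref{eq:adx1u}, I would obtain
\[
\ad x_1(R)=\sum_{a=l}^{k+1}\nu_a\,u_a\,u_{N+1-a}=0,\qquad
\nu_a=\lambda_{a-1}+\chi(\alpha_1,a\alpha_1+\alpha_2)\,\lambda_a,
\]
with the conventions $\lambda_{l-1}=\lambda_{k+1}=0$. The two boundary coefficients come out exactly right: $\nu_{k+1}=\lambda_k=1$, and by multiplicativity of $\chi$,
\[
\nu_l=\chi(\alpha_1,l\alpha_1+\alpha_2)\,\lambda_l
=-\chi\big((k+1)\alpha_1+\alpha_2,\,l\alpha_1+\alpha_2\big).
\]
Since $k+1\ge l$, Example~\ref{ex:Lyndon2} applied to $x_1^{k+1}x_2x_1^lx_2$ now shows that $[x_1^{k+1}x_2x_1^lx_2]$ is not a root vector.

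Finally I would descend with Lemma~\ref{le:0.7}, taking its parameters to be $k+1$ and $l-1$: this is legitimate because $k+1>l-1$, because $(k+1)_q^!b_{k+1}\ne0$, and because $[x_1^{k+1}x_2x_1^{(l-1)+1}x_2]=[x_1^{k+1}x_2x_1^lx_2]$ has just been shown not to be a root vector. The lemma then yields that $[x_1^{k+1}x_2x_1^{l-1}x_2]$ is not a root vector, as required. The one point needing care is the core step: one must verify that $\ad x_1$ sends the two prescribed boundary coefficients of the $(k,l)$-relation precisely onto the boundary coefficients that Example~\ref{ex:Lyndon2} demands for the pair $(k+1,l)$. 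This rests on the identity $\chi(\alpha_1,l\alpha_1+\alpha_2)\chi(k\alpha_1+\alpha_2,l\alpha_1+\alpha_2)=\chi((k+1)\alpha_1+\alpha_2,l\alpha_1+\alpha_2)$, and it is also what makes the separate treatment of the degenerate case $u_{k+1}=0$ unavoidable.
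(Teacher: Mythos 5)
Your overall strategy works and is genuinely different from the paper's, but one step as written is wrong: the treatment of the degenerate case $u_{k+1}=0$. You claim that the Shirshov decomposition of $x_1^{k+1}x_2x_1^{l-1}x_2$ is $(x_1^{k+1}x_2)(x_1^{l-1}x_2)$, so that the super-letter equals $u_{k+1}u_{l-1}-\chi(\cdot,\cdot)\,u_{l-1}u_{k+1}$. That decomposition is only the Shirshov one when $k+1=(l-1)+1$, i.e.\ $k=l-1$, which never happens under $k\ge l$. Since $k>l-1$, the word $x_1^kx_2x_1^{l-1}x_2$ is itself Lyndon, so the Shirshov decomposition (minimal $|w|$) is $x_1\cdot(x_1^kx_2x_1^{l-1}x_2)$ and
$$[x_1^{k+1}x_2x_1^{l-1}x_2]=\ad x_1\big([x_1^kx_2x_1^{l-1}x_2]\big),$$
which equals your expression only up to middle terms $u_iu_{k+l-i}$ with $l-1<i<k+1$; already for $k=l=1$ one finds $[x_1^2x_2x_2]=u_2u_0-q_{12}^2s\,u_0u_2+q_{12}(q-s)u_1^2$. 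In particular $u_{k+1}=0$ does not force the super-letter to vanish. The conclusion of your degenerate case is nevertheless true, and the repair is immediate with tools you already use: if $u_{k+1}=0$ in $\NA(V)$, then the relation that Example~\ref{ex:Lyndon2} demands for the pair $(k+1,l-1)$, namely $\sum_i\lambda_iu_iu_{k+l-i}=0$ with $\lambda_{k+1}=1$, $\lambda_{l-1}=-\chi((k+1)\alpha_1+\alpha_2,(l-1)\alpha_1+\alpha_2)$ and all middle coefficients zero, holds trivially because both boundary monomials contain the factor $u_{k+1}$. Hence $[x_1^{k+1}x_2x_1^{l-1}x_2]$ is not a root vector.

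With that step repaired, the rest of your argument is correct and follows a different route than the paper. The paper splits into $k=l$ and $k>l$: for $k=l$ it invokes Lemma~\ref{le:u_m^2} to get $u_{k+1}=0$; for $k>l$ it first descends at fixed $k$ (Lemma~\ref{le:0.7}, giving that $[x_1^kx_2x_1^{l-1}x_2]$ is not a root vector) and then raises $k$ by one via Remark~\ref{re:rvcrit}. You instead raise first and descend second: you apply $\ad x_1$ to the relation in degree $(k+l)\alpha_1+2\alpha_2$ and verify — correctly, including the bicharacter identity $\chi(\alpha_1,l\alpha_1+\alpha_2)\chi(k\alpha_1+\alpha_2,l\alpha_1+\alpha_2)=\chi((k+1)\alpha_1+\alpha_2,l\alpha_1+\alpha_2)$ — that the boundary coefficients come out exactly as Example~\ref{ex:Lyndon2} requires for the pair $(k+1,l)$, and then Lemma~\ref{le:0.7} finishes. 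Your route treats $k=l$ and $k>l$ uniformly (for $k=l$ the candidate condition $q^{k^2}r^ks=-1$ is precisely what makes the two boundary constraints on the single coefficient consistent), and it makes the raising step completely explicit, whereas the paper's appeal to Remark~\ref{re:rvcrit} hides a PBW re-expansion argument for $x_1\cdot(\text{products of larger candidates})$. The price is the separate degenerate case $u_{k+1}=0$, needed so that Lemma~\ref{le:0.7} can be applied with $(k+1)_q^!b_{k+1}\ne0$; the paper needs the analogous fact only through Lemma~\ref{le:u_m^2} in the case $k=l$.
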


\begin{proof}
First assume that $k=l$. Then $[x_1^{k+1}x_2]=0$ by Lemma~\ref{le:u_m^2}.
Therefore $[x_1^{k+1}x_2x_1^{l-1}x_2]$ is not a root vector.
Suppose now that $k>l$. Then $[x_1^kx_2x_1^{l-1}x_2]$ is not a root vector by
Lemma~\ref{le:0.7}, and Remark~\ref{re:rvcrit} implies that
$[x_1^{k+1}x_2x_1^{l-1}x_2]$ is not a root vector.
\end{proof}

For any $n\in \ndN _0$ let
\begin{align}
 U_n=&\,\bigoplus_{i=0}^n \fie u_iu_{n-i}\subseteq T(V),&
 U_n'=&\,\bigoplus_{i=0}^{n-1} \fie u_iu_{n-i}\subseteq T(V).
\end{align}

The subspaces $U_n$ and $U'_n$ will appear at several places as technical tools.
Here we discuss some elementary results related to them.

\begin{lemm} \label{le:adxinjective}
  The map $\ad x_1:U_m\to U_{m+1}$
  is injective for any $m\in \ndN_0$.
\end{lemm}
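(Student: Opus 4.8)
The plan is to show that the kernel of $\ad x_1\colon U_m\to U_{m+1}$ is trivial by computing $d_1$ of a putative kernel element and exploiting the explicit formulas already available. Recall from \eqref{eq:addpu} that $\ad x_1(\dpu_k)=(k+1)_q(1-q^kr)\dpu_{k+1}$, and that $\ad x_1$ is a (twisted) derivation, so its action on products $u_iu_{m-i}$ can be expanded via the Leibniz-type rule $\ad x_1(xy)=\ad x_1(x)\,y+\chi(\alpha_1,\deg x)\,x\,\ad x_1(y)$. Since $u_i=(i)_q^!b_i\,\dpu_i$ whenever $(i)_q^!b_i\neq 0$, it is cleaner to work in the tensor algebra $T(V)$, where no $u_i$ vanishes and all the normalization constants are genuine nonzero scalars; injectivity in $T(V)$ is exactly the statement we want.

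First I would fix an arbitrary element $Y=\sum_{i=0}^m\lambda_i u_iu_{m-i}\in U_m$ with $\ad x_1(Y)=0$ in $T(V)$, and expand $\ad x_1(Y)$ as a linear combination of the spanning products $u_ju_{m+1-j}$ for $0\le j\le m+1$ of $U_{m+1}$. Using \eqref{eq:adx1u}, namely $\ad x_1(u_k)=u_{k+1}$, together with the twisted Leibniz rule, each summand $\ad x_1(u_iu_{m-i})$ becomes $u_{i+1}u_{m-i}+\chi(\alpha_1,\beta_i)\,u_iu_{m+1-i}$, where $\beta_i=\deg(u_i)=i\alpha_1+\alpha_2$ and the scalar $\chi(\alpha_1,\beta_i)=q^i q_{12}$ is explicit and nonzero. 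Collecting the coefficient of each basis product $u_ju_{m+1-j}$ then yields a triangular linear system in the $\lambda_i$.

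The key point I expect to carry the argument is that this linear system is triangular with nonzero diagonal entries. Indeed, the products $u_ju_{m+1-j}$ with $0\le j\le m+1$ are linearly independent in $T(V)$ (they are distinct words up to the straightening implicit in the super-letters, and crucially $u_k\neq 0$ in $T(V)$ for every $k$), and the coefficient of the extremal term $u_{m+1}u_0$ in $\ad x_1(Y)$ is precisely $\lambda_m$ (coming only from $i=m$), while more generally the coefficient of $u_{j}u_{m+1-j}$ involves $\lambda_{j-1}$ and $\lambda_{j}$ with invertible scalar weights $q^jq_{12}$. Setting all coefficients to zero and reading the system from the top forces $\lambda_m=0$, then $\lambda_{m-1}=0$, and so on down to $\lambda_0=0$, whence $Y=0$.

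The main obstacle, and the step requiring care rather than cleverness, is justifying the linear independence of the products $u_ju_{m+1-j}$ in $T(V)$ and verifying that the cross-terms assemble into a genuinely triangular (as opposed to merely banded) system with no accidental cancellation collapsing the diagonal. This hinges on the fact that in the \emph{tensor} algebra the super-letters $u_k$ are nonzero and the monomials $u_ju_{m+1-j}$ are distinct leading words, so no relation among them can occur; the twist scalars $\chi(\alpha_1,\beta_i)=q^iq_{12}\in\fie^\times$ are manifestly invertible, guaranteeing the diagonal entries never vanish. Once this independence is in place, the back-substitution is immediate and the injectivity follows.
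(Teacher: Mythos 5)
Your proof is correct and follows essentially the same route as the paper: expand $\ad x_1$ on $U_m$ via the twisted Leibniz rule and $\ad x_1(u_i)=u_{i+1}$, obtain $\ad x_1(v)=\sum_i\lambda_i\bigl(u_{i+1}u_{m-i}+q^iq_{12}u_iu_{m+1-i}\bigr)$, and exploit the triangular (bidiagonal) structure of the coefficients with respect to the independent products $u_ju_{m+1-j}$. The paper phrases the final step as picking the largest $j$ with $\lambda_j\ne 0$ and observing that the coefficient of $u_{j+1}u_{m-j}$ is exactly $\lambda_j$, which is just your back-substitution argument read in a single step.
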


\begin{proof}
  Let $m\in \ndN_0$, $\lambda_0,\dots,\lambda_m\in \fie $,
and $v=\sum_{i=0}^m \lambda_iu_iu_{m-i}$. Then
\begin{align*}
  \ad x_1(v)
  =\sum_{i=0}^m \lambda_i(u_{i+1}u_{m-i}+q^iq_{12}u_iu_{m+1-i})
\end{align*}
by Equation~\eqref{eq:adx1u}. Assume that $v\ne 0$.
Let $0\le j\le m$ such that $\lambda_j\ne 0$
and either $j=m$ or $\lambda_{j+1}=0$.
Then the coefficient of $u_{j+1}u_{m-j}$ in the above
expression is $\lambda_j$,
and hence $\ad x_1(v)\ne 0$.
\end{proof}

\begin{prop}\label{pr:linsys}
Let $k\in \ndN _0$ such that $(k)_q^!b_k\ne 0$.
Let $v\in U_k'\cap \ker (\pi )$ and let $\mu_0,\dots,\mu_{k-1}\in \fie $
such that
$$d_1(v)=\sum_{i=0}^{k-1} \mu_i(-q_{21})^i\dpu_i\dpu_{k-1-i}.$$
Then $\sum_{i=0}^{k-1}q^{-i(i+1)/2}\mu_i=0$.
\end{prop}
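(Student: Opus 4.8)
The plan is to extract the constraint $\sum_{i=0}^{k-1}q^{-i(i+1)/2}\mu_i=0$ from the single piece of information we are given, namely that $v\in U_k'$ lies in the kernel of $\pi$, i.e. that $v=0$ in $\NA(V)$. The element $v$ is a linear combination $v=\sum_{i=0}^{k-1}\nu_i\dpu_i\dpu_{k-i}$ (after rescaling the $u_iu_{k-i}$ into the normalized basis, which is legitimate since $(k)_q^!b_k\neq0$ forces every relevant $(j)_q^!b_j\neq0$ for $j\le k$), and the displayed formula for $d_1(v)$ gives the coefficients $\mu_i$ in terms of the $\nu_i$ by Lemma~\ref{le:dZ}. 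Since $v=0$ in $\NA(V)$, every partial coproduct $\Delta_{\beta,\gamma}(v)$ vanishes in $\NA(V)\otimes\NA(V)$.

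First I would compute $\Delta_{k\alpha_1+\alpha_2,\,\alpha_2}(v)$, the bidegree picking out one factor of degree $k\alpha_1+\alpha_2$ and the remaining factor of degree $\alpha_2=\beta_0$. The point is that in this bidegree the only surviving contribution comes from $x_2=u_0=\dpu_0$ sitting entirely on the right, with the full $x_1^i x_2$-part collecting on the left. Concretely I would use Equation~\eqref{eq:Deltadpu} to expand $\Delta(\dpu_i\dpu_{k-i})$ and read off the $(k\alpha_1+\alpha_2,\alpha_2)$-component. From $\dpu_i$ one extracts either the primitive-like piece $\dpu_i\otimes1$ (contributing nothing of degree $\alpha_2$ on the right) or a term $\tfrac{x_1^a}{(a)_q^!}\otimes\dpu_{i-a}$; to land in degree $\alpha_2$ on the right one needs the right tensor factor to be exactly $\dpu_0$ times the right-hand $\dpu$, which forces $i-a$ and the contribution from $\dpu_{k-i}$ to combine into a single $\dpu_0$. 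Carrying out this bookkeeping produces a coefficient of $x_1^k\otimes \dpu_0$ (up to the nonzero normalization $1/(k)_q^!$) that is a $q$-power-weighted sum of the $\nu_i$. Setting this coefficient to zero is the relation we seek, modulo translating $\nu_i$ back into $\mu_i$.

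The translation step is where Lemma~\ref{le:dZ} does the work: with $Z=v$ and $\lambda_i=\nu_i/(-q_{21})^i$ it tells us $d_1(v)=-q_{21}\sum_{i}(\lambda_{i+1}-q^i\lambda_i)(-q_{21})^i\dpu_i\dpu_{k-1-i}$, so $\mu_i$ is (a $-q_{21}$-scaling of) the difference $\lambda_{i+1}-q^i\lambda_i$. Then $\sum_{i=0}^{k-1}q^{-i(i+1)/2}\mu_i$ becomes, up to the overall nonzero factor, a telescoping-type combination $\sum_i q^{-i(i+1)/2}(\lambda_{i+1}-q^i\lambda_i)$. I expect this to collapse—via a shift of summation index in the first part and the algebraic identity $q^{-i(i+1)/2}q^{i}=q^{-(i-1)i/2}$, so that consecutive terms cancel—into a single boundary expression, specifically $q^{-k(k-1)/2}\lambda_k$ (the $\lambda_0$ boundary term vanishing because $i$ starts appropriately, or being matched by the $\Delta$-computation). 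The main obstacle, and the step I would check most carefully, is reconciling the two descriptions: the $q$-weighted coproduct coefficient from the $\Delta_{k\alpha_1+\alpha_2,\alpha_2}$ computation must be shown to equal precisely the telescoped sum $\sum_i q^{-i(i+1)/2}\mu_i$ up to a nonzero scalar, which is exactly the role played by the combinatorial identities of Section~\ref{se:equations} (in particular a special case of Lemma~\ref{le:binomsum}, whose exponent $q^{-j(j+1)/2}$ matches the weights appearing here). Verifying that the exponents of $q$ produced by the iterated coproduct line up with $-i(i+1)/2$ is the delicate bookkeeping; once that matches, the vanishing of $\Delta_{k\alpha_1+\alpha_2,\alpha_2}(v)$ is literally the assertion $\sum_{i=0}^{k-1}q^{-i(i+1)/2}\mu_i=0$.
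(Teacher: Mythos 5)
Your argument has the right skeleton, and it is in fact the paper's skeleton: write $v=\sum_{i=0}^{k}\lambda_i(-q_{21})^i\dpu_i\dpu_{k-i}$ with $\lambda_k=0$ (that is what $v\in U_k'$ gives), use Lemma~\ref{le:dZ} to identify $\mu_i=-q_{21}(\lambda_{i+1}-q^i\lambda_i)$, and collapse the weighted sum. But there is a genuine gap at the single point where the hypothesis $v\in\ker(\pi)$ has to enter. The telescoped sum is
\begin{align*}
\sum_{i=0}^{k-1}q^{-i(i+1)/2}\bigl(\lambda_{i+1}-q^i\lambda_i\bigr)
=q^{-k(k-1)/2}\lambda_k-\lambda_0,
\end{align*}
and the boundary term $-\lambda_0$ does \emph{not} vanish ``because $i$ starts appropriately''. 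If it did, the conclusion would hold for every element of $U_k'$ with no kernel hypothesis at all, which is false: for $v=\dpu_0\dpu_k$ (generic braiding, so $v\notin\ker(\pi)$) Lemma~\ref{le:dZ} gives $\mu_0=q_{21}$ and $\mu_i=0$ for $i\ge1$, hence $\sum_i q^{-i(i+1)/2}\mu_i=q_{21}\ne0$. So any proof must derive $\lambda_0=0$ from $v\in\ker(\pi)$. The paper does this in one line with the \emph{left} derivation $d_2$: since $d_2(\dpu_i)=\delta_{i0}$ by \eqref{eq:ddpu}, Lemma~\ref{le:dZ} together with $\lambda_k=0$ gives $d_2(v)=\lambda_0\dpu_k$; as $d_2$ descends to $\NA(V)$ and $\dpu_k\ne0$ there (because $(k)_q^!b_k\ne0$), the kernel condition forces $\lambda_0=0$.

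Your proposed substitute for this step, the slice $\Delta_{k\alpha_1+\alpha_2,\alpha_2}(v)$, is mis-computed. In that bidegree the left tensor factor has degree $k\alpha_1+\alpha_2$, so a component ``$x_1^k\otimes\dpu_0$'' cannot occur for degree reasons; moreover two $\dpu$'s can never ``combine into a single $\dpu_0$'' on the right, since a product of two $\dpu$'s has $x_2$-degree two — one of the two coproduct factors must contribute its $(\,\cdot\,)\otimes1$ term. What \eqref{eq:Deltadpu} actually yields is
\begin{align*}
\Delta_{k\alpha_1+\alpha_2,\alpha_2}(\dpu_i\dpu_{k-i})
=\Bigl(\frac{\dpu_i x_1^{k-i}}{(k-i)_q^!}
+q_{21}^{k-i}s\,\frac{x_1^i\dpu_{k-i}}{(i)_q^!}\Bigr)\otimes\dpu_0,
\end{align*}
with mixed products as left factors. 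To turn the vanishing of this into scalar equations you would have to commute $x_1^i$ past $\dpu_{k-i}$ via \eqref{eq:addpu} and invoke linear independence of the PBW monomials $\dpu_jx_1^{k-j}$, $0\le j\le k$, in $\NA(V)$ (Kharchenko's theorem plus $(k)_q^!b_k\ne0$); the coefficient of $\dpu_0x_1^k$ then does give $\lambda_0=0$, so your route is repairable — but that entire argument is absent from your proposal, and it is far heavier than the paper's $d_2$-computation. (The slice whose left factor really is $x_1^k$ is $\Delta_{k\alpha_1,2\alpha_2}$, but its right factor is $x_2^2$, which carries no information when $s=-1$.)
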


\begin{proof}
For any $\lambda=(\lambda_0,\dots,\lambda_k)\in \fie^{k+1}$ let
$\bar \mu(\lambda)=(\mu_i(\lambda))_{0\le i<k}\in \fie ^k$ such that
$$\mu_i(\lambda)=\lambda_{i+1}-\lambda_iq^i $$
whenever $0\le i<k$.
Let $W=\{\lambda\in \fie^{k+1}\mid \lambda_0=0\}$. Then the linear map
$\bar \mu:W\to \fie^k$,
$\lambda\mapsto \bar \mu(\lambda)$, is bijective. The inverse map is given
by
\begin{align} \label{eq:barmuinverse}
  \bar \mu^{-1}(\mu_0,\dots,\mu_{k-1})=(\lambda_i)_{0\le i\le k},\quad
	\lambda_i=\sum_{j=0}^{i-1}q^{(i+j)(i-j-1)/2}\mu_j.
\end{align}
Now let $\lambda=(\lambda_0,\lambda_1,\dots,\lambda_{k-1},0)\in \fie^{k+1}$
such that
$$v=\sum_{i=0}^k\lambda_i(-q_{21})^i\dpu_i\dpu_{k-i}.$$
Since $v\in \ker (\pi )$, it follows from Remark~\ref{re:kerd}
that $d_2(v)\in \ker (\pi )$.
Since $v\in U_k'$ and $u_k\ne 0$ in $\NA (V)$, Lemma~\ref{le:dZ} implies that $\lambda_0=0$,
that is, $\lambda \in W$. Then we obtain from Equation~\eqref{eq:barmuinverse}
and from $\lambda_k=0$ that $\sum_{j=0}^{k-1}q^{-j(j+1)/2}\mu_j(\lambda)=0$.
Moreover,
\begin{align*}
d_1(v)=&\,
-q_{21}\sum_{i=0}^{k-1}\mu_i(\lambda)(-q_{21})^i\dpu_{i}\dpu_{k-1-i}
\end{align*}
by Lemma~\ref{le:dZ}, and hence $\mu_j=-q_{21}\mu_j(\lambda)$ for any $0\le j<k$.
This implies the claim.
\end{proof}

The following elements of $T(V)$
will play a fundamental role in Theorem~\ref{main}.

\begin{defi}
For all $k\in \ndN_0$ with $(k)_q^!b_k\ne 0$, let
\begin{align} \label{eq:Pk}
 P_k=\sum_{i=0}^k (-q_{21})^iq^{i(i-1)/2}\dpu_i\dpu_{k-i}\in T(V).
\end{align}
\end{defi}

\begin{lemm} \label{d(Pk)}
Let $k\in \ndN_0$ with $(k)_q^!b_k\ne 0$. Then $P_k=0$ in $\NA (V)$ if and only if
$q^{k(k-1)/2}(-r)^ks=-1$.
\end{lemm}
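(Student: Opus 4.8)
The plan is to exploit that $P_k$ is homogeneous of degree $k\alpha_1+2\alpha_2$, which is a non-zero element of $\ndN_0^2$. By Remark~\ref{re:kerd}, a homogeneous element of $\NA(V)$ of non-zero degree vanishes if and only if all of its skew-derivatives $d_i$ vanish, so it suffices to determine when both $d_1(P_k)$ and $d_2(P_k)$ are zero in $\NA(V)$. Since $P_k$ has exactly the shape $Z=\sum_{i=0}^k\lambda_i(-q_{21})^i\dpu_i\dpu_{k-i}$ to which Lemma~\ref{le:dZ} applies, with the specific coefficients $\lambda_i=q^{i(i-1)/2}$, I would simply substitute these into the two formulas provided by that lemma. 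The hypothesis $(k)_q^!b_k\neq 0$ is precisely what Lemma~\ref{le:dZ} needs.

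First I would compute $d_1(P_k)$. According to Lemma~\ref{le:dZ}, the coefficient of $(-q_{21})^i\dpu_i\dpu_{k-1-i}$ in $d_1(P_k)$ is, up to the global factor $-q_{21}$, the expression $\lambda_{i+1}-q^i\lambda_i$. With $\lambda_i=q^{i(i-1)/2}$ one has $q^i\lambda_i=q^{i+i(i-1)/2}=q^{i(i+1)/2}=\lambda_{i+1}$, so every such coefficient vanishes and $d_1(P_k)=0$ identically, already in $T(V)$. This is the decisive point of the argument: the coefficients defining $P_k$ are chosen exactly so that the $d_1$-derivative vanishes, which makes the first of the two conditions of Remark~\ref{re:kerd} automatic and reduces the whole question to $d_2$.

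Next I would compute $d_2(P_k)$. The second formula of Lemma~\ref{le:dZ} gives $d_2(P_k)=(\lambda_0+(-r)^ks\lambda_k)\dpu_k$, and inserting $\lambda_0=1$ and $\lambda_k=q^{k(k-1)/2}$ yields $d_2(P_k)=(1+q^{k(k-1)/2}(-r)^ks)\dpu_k$. Because $(k)_q^!b_k\neq 0$, Remark~\ref{re:Deltau}(2) guarantees $\dpu_k\neq 0$ in $\NA(V)$, so $d_2(P_k)=0$ in $\NA(V)$ if and only if the scalar $1+q^{k(k-1)/2}(-r)^ks$ vanishes. Combining this with $d_1(P_k)=0$ and applying Remark~\ref{re:kerd}, I conclude that $P_k=0$ in $\NA(V)$ precisely when $q^{k(k-1)/2}(-r)^ks=-1$, which is exactly the assertion.

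I do not expect any genuine obstacle here. The entire proof reduces to two substitutions into the already-established Lemma~\ref{le:dZ}, the elementary exponent identity $i+i(i-1)/2=i(i+1)/2$ that forces $d_1(P_k)=0$, and the non-vanishing of $\dpu_k$ under the stated hypothesis. The only thing requiring care is confirming that $P_k$ matches the form of $Z$ in Lemma~\ref{le:dZ} with the correct coefficients, and that $P_k$ has non-zero degree so that Remark~\ref{re:kerd} is applicable.
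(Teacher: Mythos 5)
Your proof is correct and follows exactly the paper's own argument: apply Lemma~\ref{le:dZ} with $\lambda_i=q^{i(i-1)/2}$ to get $d_1(P_k)=0$ and $d_2(P_k)=(1+q^{k(k-1)/2}(-r)^ks)\dpu_k$, then conclude via Remark~\ref{re:kerd} and the non-vanishing of $\dpu_k$. The only difference is that you spell out the exponent cancellation and the role of $(k)_q^!b_k\ne 0$ explicitly, which the paper leaves implicit.
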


\begin{proof}
 By Lemma~\ref{le:dZ},
\begin{align*}
 d_1(P_k)=&\,0, \quad
 d_2(P_k)=(1+(-r)^ksq^{k(k-1)/2})\dpu_{k}.
\end{align*}
Since $(k)_q^!b_k\ne 0$, the claim follows from this and from Remark~\ref{re:kerd}.
\end{proof}

We also introduce a family of elements $\su(k,t)$ of $T(V)$, which are related to the
elements $P_k$ by Lemmas~\ref{le:adx1su} and \ref{ad pi} below.
Those lemmas themself are needed for Lemma~\ref{le:411}, which is a crucial
ingredient of the proof of Theorem~\ref{main}.

\begin{defi} \label{de:Skt}
For all $k, t\in \ndN _0$ with $0\le t\le k$ and $(k)_q^!b_k\ne 0$ let
\begin{align*}
	\su(k,t)=& \sum_{i=t}^k(-q_{21})^iq^{(i-t)(i-t-1)/2}{i\choose t}_{\!\!q}
	\dpu_i\dpu_{k-i} \in T(V).
\end{align*}
In particular, $\su(k,0)=P_k$.
\end{defi}

\begin{lemm} \label{le:adx1su}
	Let $k, t\in \ndN _0$ with $0\le t\le k$ such that $(k+1)_q^!b_{k+1}\ne 0$. Then
\begin{align*}
	q_{12}^{-1}\ad x_1(\su(k,t))&=q^t(1-q^{k-t}r)(k+1-t)_q\su(k+1,t)\\
                 &\qquad +r^{-1}(q^{2k-t}r^2-1)(t+1)_q\su(k+1,t+1).
\end{align*}
\end{lemm}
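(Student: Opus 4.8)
The plan is to compute $\ad x_1(\su(k,t))$ directly from its definition as a combination of products $\dpu_i\dpu_{k-i}$ and to read off the coefficients, comparing them with those on the right hand side. The structural input is that $\ad x_1$ is a skew-derivation: for homogeneous $x,y\in T(V)$ one has $\ad x_1(xy)=\ad x_1(x)\,y+\chi(\alpha_1,\deg x)\,x\,\ad x_1(y)$, which follows at once from the formula $\ad x_1(z)=x_1z-\chi(\alpha_1,\deg z)zx_1$. Since $\deg\dpu_i=i\alpha_1+\alpha_2$ gives $\chi(\alpha_1,\deg\dpu_i)=q^iq_{12}$, combining this with \eqref{eq:addpu} yields
\begin{align*}
 \ad x_1(\dpu_i\dpu_{k-i})
 =&\,(i+1)_q(1-q^ir)\dpu_{i+1}\dpu_{k-i}\\
  &\,+q^iq_{12}(k-i+1)_q(1-q^{k-i}r)\dpu_i\dpu_{k-i+1}.
\end{align*}
Here the hypothesis $(k+1)_q^!b_{k+1}\ne0$ guarantees that all normalizations $\dpu_j$ with $0\le j\le k+1$ are well defined, so that \eqref{eq:addpu} applies throughout the range occurring in $\su(k,t)$.

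Substituting this into the definition of $\su(k,t)$ and collecting the terms $\dpu_j\dpu_{k+1-j}$, I would write $q_{12}^{-1}\ad x_1(\su(k,t))=\sum_j c_j(-q_{21})^j\dpu_j\dpu_{k+1-j}$. The summand $\dpu_{i+1}\dpu_{k-i}$ contributes with $i=j-1$ and carries the prefactor $(-q_{21})^{j-1}$; rewriting $(-q_{21})^{j-1}=(-q_{21})^j(-q_{21})^{-1}$ and using $q_{12}^{-1}(-q_{21})^{-1}=-r^{-1}$ is exactly what produces the factor $r^{-1}$ in the second summand of the claim. The summand $\dpu_i\dpu_{k-i+1}$ contributes with $i=j$, and its factor $q_{12}$ cancels the global $q_{12}^{-1}$. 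This gives an explicit expression for $c_j$ as a first-part plus a second-part contribution, with the conventions that the first part is absent for $j=t$ (where ${j-1\choose t}_q=0$) and the second for $j=k+1$ (where $(k-j+1)_q=0$); hence both parts may be summed over the full range $t\le j\le k+1$.

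The right hand side, expanded in the same system $(-q_{21})^j\dpu_j\dpu_{k+1-j}$, has coefficient $d_j$ built from the two shifted copies $\su(k+1,t)$ and $\su(k+1,t+1)$, and it remains to verify $c_j=d_j$ for every $j$. Since each summand of $c_j$ and $d_j$ is a Laurent monomial in $r$ of degree $-1$, $0$ or $1$ (the linear factors $1-q^{j-1}r$, $1-q^{k-j}r$, $1-q^{k-t}r$ and $r^{-1}(q^{2k-t}r^2-1)$ split accordingly), I would match the coefficients of $r^{-1}$, $r^0$ and $r^1$ separately. After clearing the common power $q^{(j-t)(j-t-1)/2}$ and the factor ${j\choose t}_q$, each matching reduces to an elementary identity: the $r^{-1}$ and $r^1$ comparisons use the absorption relations $(j)_q{j-1\choose t}_q=(t+1)_q{j\choose t+1}_q=(j-t)_q{j\choose t}_q$, and all three use the additive splitting $(a+b)_q=(a)_q+q^a(b)_q$ (applied via $(k+1-t)=(j-t)+(k+1-j)$). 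I expect the only real care to lie in the bookkeeping — the index shift $i\mapsto i+1$, the exponents of $q$ coming from $q^{(i-t)(i-t-1)/2}$, and checking that the boundary summands genuinely vanish under the stated conventions — rather than in any single identity, each of which is routine.
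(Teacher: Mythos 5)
Your proposal is correct and follows essentially the same route as the paper: both start from $\ad x_1(\dpu_i\dpu_{k-i})=(i+1)_q(1-q^ir)\dpu_{i+1}\dpu_{k-i}+q^iq_{12}(k+1-i)_q(1-q^{k-i}r)\dpu_i\dpu_{k+1-i}$, perform the same index shift producing the $-q_{12}r^{-1}$ factor, and close the computation with the same identities $(j)_q{j-1\choose t}_q=(t+1)_q{j\choose t+1}_q=(j-t)_q{j\choose t}_q$ and $(k+1-t)_q=(j-t)_q+q^{j-t}(k+1-j)_q$. The only difference is organizational — you match coefficients of each $\dpu_j\dpu_{k+1-j}$ separated by powers of $r$, while the paper regroups whole sums into $\su(k+1,t)$ and $\su(k+1,t+1)$ — and the bookkeeping you defer does indeed go through as you predict.
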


\begin{proof}
	First note that for $0\le i\le k$ we get
	\begin{align*}
		\ad x_1(\dpu_i\dpu_{k-i})=&\,(i+1)_q(1-q^ir)\dpu_{i+1}\dpu_{k-i}\\
		&\,+q^iq_{12}(k+1-i)_q(1-q^{k-i}r)\dpu_i\dpu_{k+1-i}
	\end{align*}
	by Equation~\eqref{eq:addpu}. Moreover, $(k+1)_q^!\ne 0$. Hence
\begin{align*}
	&\ad x_1(\su(k,t))\\
	&=\sum_{i=t}^k (-q_{21})^iq^{(i-t)(i-t-1)/2}(i+1)_q(1-q^ir){i\choose t}_q
	\dpu_{i+1}\dpu_{k-i}\\
	&\quad +\sum_{i=t}^k(-q_{21})^iq^{(i-t)(i-t-1)/2}
	q^iq_{12}(k+1-i)_q(1-q^{k-i}r)
	{i\choose t}_{\!\!q}\dpu_i\dpu_{k+1-i}\\
	&=-q_{12}r^{-1}\sum_{i=t+1}^{k+1}(-q_{21})^iq^{(i-t-1)(i-t-2)/2}
	(i)_q(1-q^{i-1}r){i-1\choose t}_{\!\!q} \dpu_i\dpu_{k+1-i}\\
	&\quad +q_{12}q^t\sum_{i=t}^{k+1}(-q_{21})^iq^{(i-t)(i-t+1)/2}(k+1-i)_q(1-q^{k-i}r)
	{i\choose t}_{\!\!q}\dpu_i\dpu_{k+1-i}.
\end{align*}
Now in the first term we replace $(i)_q{i-1\choose t}_{\!\!q}$ by
$(t+1)_q{i\choose t+1}_{\!\!q}$ and $(i-t)_q{i\choose t}_{\!\!q}$, respectively.
We then rewrite this first term as
\begin{align}
	\label{eq:adx1SA}
  &-q_{12}r^{-1}(t+1)_q\sum_{i=t+1}^{k+1}(-q_{21})^iq^{(i-t-1)(i-t-2)/2}
	{i\choose t+1}_{\!\!q}\dpu_i\dpu_{k+1-i}\\
	\label{eq:adx1SB}
	&\quad +q_{12}q^t\sum_{i=t+1}^{k+1}(-q_{21})^iq^{(i-t-1)(i-t)/2}
  (i-t)_q{i\choose t}_{\!\!q}\dpu_i\dpu_{k+1-i}.
\end{align}
The second term of $\ad x_1(\su(k,t))$ can be written as
\begin{align}
	\label{eq:adx1SC}
	&q_{12}q^t\sum_{i=t}^{k+1}(-q_{21})^iq^{(i-t)(i-t-1)/2}q^{i-t}(k+1-i)_q
  {i\choose t}_{\!\!q}\dpu_i\dpu_{k+1-i}\\
	\label{eq:adx1SD}
	&\quad -q_{12}q^kr\sum_{i=t}^{k+1}(-q_{21})^iq^{(i-t)(i-t-1)/2}(k+1-i)_q
	{i\choose t}_{\!\!q}\dpu_i\dpu_{k+1-i}.
\end{align}
Now \eqref{eq:adx1SA} is equal to $-q_{12}r^{-1}(t+1)_q\su(k+1,t+1)$, and the sum
of \eqref{eq:adx1SB} and \eqref{eq:adx1SC} is equal to $q_{12}q^t
(k+1-t)_q\su(k+1,t)$.
Finally, in \eqref{eq:adx1SD} we replace $(k+1-i)_q$ by
$(k+1-t)_q-q^{k+1-i}(i-t)_q$ and $(i-t)_q{i\choose t}_q$ by
$(t+1)_q{i\choose t+1}_q$. Thus \eqref{eq:adx1SD} is equal to
$$ -q_{12}q^kr(k+1-t)_q\su(k+1,t)
+q_{12}q^{2k-t}r(t+1)_q\su(k+1,t+1).$$
This implies the lemma.
\end{proof}

\begin{lemm} \label{ad pi}
	Let $m,k \in \ndN_0$ such that $(k+m)_q^!b_{k+m}\ne 0$. Then
\begin{align*}
	&q_{12}^{-m}(\ad x_1)^m(P_k)
	=
	\sum_{i=0}^m \frac{(m)_q^!}{(m-i)_q^!}
	\lambda_{(m-i,k)}%\left(\prod_{j=1}^{m-i}(1-q^{k+j-1}r)(k+j)_q\right)
	\beta_{(i,m,k)}\su(k+m,i)%\prod_{j=1}^i (q^{2k+m-j}r-r^{-1})\su(k+m,i),
\end{align*}
where for any $i,n,m'\in \ndN_0$,
\begin{align*}\lambda_{(n,k)}=\prod_{j=1}^{n}(1-q^{k-1+j}r)(k+j)_q,\quad
      \beta_{(i,m',k)}=\prod_{j=1}^{i}(q^{m'+2k-j}r-r^{-1}).
		\end{align*}
\end{lemm}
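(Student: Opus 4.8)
The plan is to prove the formula by induction on $m$, keeping $k$ arbitrary, with Lemma~\ref{le:adx1su} as the only computational input. The base case $m=0$ is immediate: the right-hand side collapses to its $i=0$ summand, and since $\lambda_{(0,k)}=1$ and $\beta_{(0,0,k)}=1$ are empty products, it reads $\su(k,0)=P_k=q_{12}^{0}(\ad x_1)^0(P_k)$. Before starting, I would check that the hypotheses are compatible with the induction. Since
$$(k+m+1)_q^!b_{k+m+1}=(k+m+1)_q(1-q^{k+m}r)(k+m)_q^!b_{k+m},$$
the assumption $(k+m+1)_q^!b_{k+m+1}\ne 0$ forces $(k+m)_q^!b_{k+m}\ne 0$, so the case $m$ (with the same $k$) is available; moreover Lemma~\ref{le:adx1su} applies to $\su(k+m,i)$ precisely because its hypothesis is $(k+m+1)_q^!b_{k+m+1}\ne 0$.

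For the inductive step I apply $q_{12}^{-1}\ad x_1$ to the formula for $m$ and substitute Lemma~\ref{le:adx1su} with $k$ replaced by $k+m$ and $t=i$. This writes $q_{12}^{-(m+1)}(\ad x_1)^{m+1}(P_k)$ as a linear combination of the products $\su(k+m+1,i)$ and $\su(k+m+1,i+1)$, $0\le i\le m$. After reindexing, the coefficient of $\su(k+m+1,j)$ receives exactly two contributions: one, call it $A_j$, from the first term of Lemma~\ref{le:adx1su} at $i=j$, and one, call it $B_j$, from the second term at $i=j-1$. The target is to show $A_j+B_j=\frac{(m+1)_q^!}{(m+1-j)_q^!}\lambda_{(m+1-j,k)}\beta_{(j,m+1,k)}$ for $0\le j\le m+1$, where the boundary values $j=0$ and $j=m+1$ are checked directly.

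The key is to peel off a common factor from $A_j$ and $B_j$. From the definitions one reads off the one-step recursions
$$\lambda_{(m+1-j,k)}=\lambda_{(m-j,k)}(1-q^{k+m-j}r)(k+m+1-j)_q,\qquad \lambda_{(m-j+1,k)}=\lambda_{(m+1-j,k)},$$
together with $\beta_{(j,m,k)}=(q^{m+2k-j}r-r^{-1})\beta_{(j-1,m,k)}$ and $\beta_{(j,m+1,k)}=(q^{m+2k}r-r^{-1})\beta_{(j-1,m,k)}$, and the quantum-factorial ratios
$$\frac{(m)_q^!}{(m-j)_q^!}=\frac{(m+1-j)_q}{(m+1)_q}\frac{(m+1)_q^!}{(m+1-j)_q^!},\qquad \frac{(m)_q^!}{(m-j+1)_q^!}=\frac{1}{(m+1)_q}\frac{(m+1)_q^!}{(m+1-j)_q^!}.$$
Rewriting the second coefficient of Lemma~\ref{le:adx1su} as $r^{-1}(q^{2(k+m)-(j-1)}r^2-1)=q^{2k+2m-j+1}r-r^{-1}$ and factoring $\frac{(m+1)_q^!}{(m+1-j)_q^!}\lambda_{(m+1-j,k)}\beta_{(j-1,m,k)}$ out of $A_j+B_j$, the claim reduces to the scalar identity
$$(m+1-j)_q\,q^j(q^{m+2k-j}r-r^{-1})+(j)_q(q^{2k+2m-j+1}r-r^{-1})=(m+1)_q(q^{m+2k}r-r^{-1}).$$

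Comparing coefficients of $r$ and of $r^{-1}$ separately, this splits into the two relations $(m+1-j)_q+q^{m+1-j}(j)_q=(m+1)_q$ and $(j)_q+q^{j}(m+1-j)_q=(m+1)_q$, both instances of the standard quantum-integer identity $(a)_q+q^{a}(b)_q=(a+b)_q$. I expect the main obstacle to be purely organizational: correctly carrying out the reindexing that isolates the two contributions to the coefficient of $\su(k+m+1,j)$, and lining up the $\lambda$- and $\beta$-recursions with the factorial ratios so that a single common factor can be extracted. Once that bookkeeping is done, the residual identity is elementary, and the induction closes.
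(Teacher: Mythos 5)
Your proof is correct and follows essentially the same route as the paper's: induction on $m$, applying $q_{12}^{-1}\ad x_1$ and Lemma~\ref{le:adx1su} with $k$ replaced by $k+m$ and $t=i$, reindexing the second sum, and using the one-step recursions for $\lambda$ and $\beta$ to reduce the coefficient comparison to exactly the scalar quantum-integer identity the paper arrives at (its displayed identity after substituting Equations~\eqref{eq:newbeta1} and \eqref{eq:newbeta2}), verified by matching coefficients of $r$ and $r^{-1}$. The only differences are cosmetic bookkeeping: you make explicit the factorial ratios, the boundary cases $j=0$, $j=m+1$, and the fact that $(k+m+1)_q^!b_{k+m+1}\ne 0$ implies $(k+m)_q^!b_{k+m}\ne 0$, all of which the paper leaves implicit.
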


\begin{proof}
Note first that for any $i,n\in \ndN_0$,
\begin{align} \label{eq:newlambda}
	\lambda_{(n+1,k)}=&\,(1-q^{k+n}r)(k+n+1)_q\lambda_{(n,k)},\\
	\label{eq:newbeta1}
	\beta_{(i,m,k)}=&\,
	(q^{m+2k-i}r-r^{-1})\beta _{(i-1,m,k)},\\
	\label{eq:newbeta2}
	\beta_{(i,m+1,k)}=&\,
	(q^{m+2k}r-r^{-1})\beta _{(i-1,m,k)}.
\end{align}

We prove the Lemma by induction on $m$.

For $m=0$, both sides of the equation in the lemma are equal to $P_k$.
Assume now that the formula in the Lemma holds for $m$ and that $(k+m+1)_q^!b_{k+m+1}\ne 0$. Then
\begin{align*}
   &q_{12}^{-m-1}(\ad x_1)^{m+1}(P_k)\\
    &=q_{12}^{-1}\ad x_1\left(q_{12}^{-m}(\ad x_1)^{m}(P_k)\right)\\
		&=q_{12}^{-1}\ad x_1 \left(\sum_{i=0}^m
		\frac{(m)_q^!}{(m-i)_q^!}
	\lambda_{(m-i,k)}
	\beta_{(i,m,k)}\su(k+m,i)\right)\\%\prod_{j=1}^i (q^{2k+m-j}r-r^{-1})\su(k+m,i)\right)\\
\end{align*}
by induction hypothesis. Now apply Lemma~\ref{le:adx1su} to obtain that
\begin{align*}
   &q_{12}^{-m-1}(\ad x_1)^{m+1}(P_k)\\
    &=\sum_{i=0}^m
		\frac{(m)_q^!}{(m-i)_q^!}
	\lambda_{(m-i,k)}
	\beta_{(i,m,k)}\cdot \\
	&\qquad q^i(1-q^{k+m-i}r)(k+m+1-i)_q\su(k+m+1,i)\\
	%\prod_{j=1}^i (q^{2k+m-j}r-r^{-1})\su(k+m+1,i)\\
    &\quad +\sum_{i=0}^m
		\frac{(m)_q^!}{(m-i)_q^!}
	\lambda_{(m-i,k)}%\left(\prod_{j=1}^{m-i}(1-q^{k+j-1}r)(k+j)_q\right)\cdot\\
	\beta_{(i,m,k)}\cdot \\%\prod_{j=1}^i (q^{2k+m-j}r-r^{-1})
    &\qquad (q^{2m+2k-i}r-r^{-1})(i+1)_q\su(k+m+1,i+1).
	\end{align*}
	In the first term we use \eqref{eq:newlambda}, in the second we change the
	summation index. Then
	\begin{align*}
   &q_{12}^{-m-1}(\ad x_1)^{m+1}(P_k)\\
    &=\sum_{i=0}^m
		\frac{(m)_q^!}{(m-i)_q^!}
	\lambda_{(m+1-i,k)}
	\beta_{(i,m,k)}
	q^i\su(k+m+1,i)\\
	%\prod_{j=1}^i (q^{2k+m-j}r-r^{-1})\su(k+m+1,i)\\
	&\quad +\sum_{i=1}^{m+1}
		\frac{(m)_q^!}{(m+1-i)_q^!}
	\lambda_{(m+1-i,k)}%\left(\prod_{j=1}^{m-i}(1-q^{k+j-1}r)(k+j)_q\right)\cdot\\
	\beta_{(i-1,m,k)}\cdot \\%\prod_{j=1}^i (q^{2k+m-j}r-r^{-1})
    &\qquad (q^{2m+2k+1-i}r-r^{-1})(i)_q\su(k+m+1,i).
	\end{align*}
  Thus it remains to show that
	\begin{align*}
	  (m+1-i)_qq^i\beta_{(i,m,k)}+(q^{2m+2k+1-i}r-r^{-1})(i)_q\beta_{(i-1,m,k)}\qquad\\
	  =(m+1)_q\beta_{(i,m+1,k)}
	\end{align*}
	for any $0\le i\le m+1$. The latter is easily done by expressing
	$\beta_{(i,m,k)}$ and $\beta_{(i,m+1,k)}$ via $\beta_{(i-1,m,k)}$
	using Equations~\eqref{eq:newbeta1} and \eqref{eq:newbeta2}, respectively, and
	then comparing coefficients.
This proves the claim for $m+1$.
\end{proof}

Recall the definitions of $Q_1^{k,m},Q_2^{k,m}\in \ndZ[q,r]$ from Lemma~\ref{le:Q1Q2}. In
this section we view $Q_1^{k,m},Q_2^{k,m}$ as elements in $\fie =\fie \otimes
_{\ndZ[q,r]}\ndZ[q,r]$ by identifying $q$ and $r$ in $\ndZ[q,r]$
with $q$ and $r$ in $\fie $, respectively.

\begin{lemm} \label{le:411}
Let $k,m\in \ndN_0$. Suppose that $(k+m+1)_q^!b_{k+m+1}\ne 0$ and that there exists $v\in
U_{k+m+1}'\cap \ker(\pi)$ such that $d_1(v)=(\ad x_1)^m(P_k)$ in $T(V)$.
Then $Q_2^{k,m}=0$.
\end{lemm}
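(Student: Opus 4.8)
The plan is to convert the hypothesis into the single scalar identity produced by Proposition~\ref{pr:linsys} and then recognize that identity as $Q_1^{k,m}=Q_2^{k,m}$. Since $v\in U_{k+m+1}'\cap\ker(\pi)$ and $(k+m+1)_q^!b_{k+m+1}\ne0$, Proposition~\ref{pr:linsys} applies with its parameter equal to $k+m+1$: writing $d_1(v)=\sum_{l=0}^{k+m}\mu_l(-q_{21})^l\dpu_l\dpu_{k+m-l}$, it yields $\sum_{l=0}^{k+m}q^{-l(l+1)/2}\mu_l=0$. To read off the $\mu_l$, I would use that $d_1(v)=(\ad x_1)^m(P_k)$ together with Lemma~\ref{ad pi}, which expands $q_{12}^{-m}(\ad x_1)^m(P_k)$ as a combination of the $\su(k+m,i)$, $0\le i\le m$, and then expand each $\su(k+m,i)$ in the basis $\dpu_l\dpu_{k+m-l}$ via Definition~\ref{de:Skt}. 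Comparing coefficients gives $\mu_l=q_{12}^m\sum_{i=0}^{\min(l,m)}\frac{(m)_q^!}{(m-i)_q^!}\lambda_{(m-i,k)}\beta_{(i,m,k)}q^{(l-i)(l-i-1)/2}{l\choose i}_q$.

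Next I would substitute this into the scalar relation and interchange the order of summation, putting $i$ on the outside and letting $l$ run from $i$ to $k+m$. The inner $l$-sum is then exactly the one evaluated in Lemma~\ref{le:binomsum}, with $t=i$ and upper bound $k+m\ge-1$, so it equals $q^{-(i+1)(2(k+m)-i)/2}{k+m+1\choose i+1}_q$. Cancelling the nonzero factor $q_{12}^m$, the relation collapses to a single sum over $i$ of the form $\sum_{i=0}^m\frac{(m)_q^!}{(m-i)_q^!}\lambda_{(m-i,k)}\beta_{(i,m,k)}q^{-(i+1)(2(k+m)-i)/2}{k+m+1\choose i+1}_q=0$.

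The crux is to identify this sum with a nonzero scalar multiple of $Q_1^{k,m}$. I would reindex $i\mapsto m-i$ and substitute the explicit product forms $\lambda_{(i,k)}=\frac{b_{k+i}}{b_k}\frac{(k+i)_q^!}{(k)_q^!}$ and $\beta_{(m-i,m,k)}=(-r)^{-(m-i)}\prod_{l=2k+i}^{2k+m-1}(1-q^lr^2)$, and rewrite ${k+m+1\choose m-i+1}_q={k+m+1\choose k+i}_q$ by symmetry. The product $\prod_{l=2k+i}^{2k+m-1}(1-q^lr^2)$ and the factor $b_{k+i}/b_k$ then match term by term the corresponding factors in $Q_1^{k,m}$. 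What remains of the $i$-th coefficient must be an $i$-independent constant, and this is where the cancellations happen: the total power of $-r$ is $(-r)^{-m}$, the combined $q$-exponent $-(m-i+1)(2k+m+i)/2-i(2k+i-1)/2$ has all its $i$-dependence cancel (leaving $-(m^2+2km+m+2k)/2$), and the ratio of $q$-factorials and binomials reduces to $\frac{(m)_q^!(k+m+1)_q^!}{(k)_q^!(m+1)_q^!}$. Because $(k+m+1)_q^!\ne0$ and $r\ne0$, the resulting constant $C$ is nonzero.

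Thus the relation reads $C\cdot Q_1^{k,m}=0$ with $C\ne0$, whence $Q_1^{k,m}=0$, and Lemma~\ref{le:Q1Q2} gives $Q_2^{k,m}=0$. The main obstacle is precisely the bookkeeping in this final matching; the fact that every trace of $i$ outside the two product factors cancels is exactly the behaviour that the definitions of $P_k$, $\su(k,t)$, $\lambda_{(n,k)}$ and $\beta_{(i,m,k)}$, together with Lemmas~\ref{le:binomsum} and~\ref{le:Q1Q2}, were set up to produce, so I expect it to go through with only elementary manipulations of quantum integers and Gaussian binomial coefficients.
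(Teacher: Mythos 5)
Your proposal is correct and follows essentially the same route as the paper's proof: Proposition~\ref{pr:linsys} applied in degree $k+m+1$, the coefficient extraction via Lemma~\ref{ad pi} and Definition~\ref{de:Skt}, the inner-sum evaluation by Lemma~\ref{le:binomsum}, and the identification of the resulting sum with a nonzero scalar multiple of $Q_1^{k,m}$, concluding with Lemma~\ref{le:Q1Q2}. The only difference is bookkeeping: you reindex $i\mapsto m-i$ first and track an explicit constant $C$, whereas the paper cancels the common $q$-factorial factors before reindexing and recognizes the remaining sum as exactly $(-r)^{-m}Q_1^{k,m}$.
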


\begin{proof}
	Let $v\in U_{k+m+1}'\cap\ker(\pi)$. Let $\mu_0,\dots,\mu_{k-1}\in \fie $ such
	that
	$$d_1(v)=\sum_{j=0}^{k+m}\mu_j(-q_{21})^j\dpu_j\dpu_{k+m-j}.$$
	Then $\sum_{j=0}^{k+m}q^{-j(j+1)/2}\mu_j=0$ by Proposition~\ref{pr:linsys}.
	Assume now also that $d_1(v)=(\ad x_1)^m(P_k)$.
  Then from Lemma~\ref{ad pi} and Definition~\ref{de:Skt} we obtain that
\begin{align*}
	&q_{12}^m\sum_{i=0}^{m} \frac{(m)_q^!}{(m-i)_q^!}
\lambda_{(m-i,k)} \beta_{(i,m,k)}
\sum_{j=i}^{k+m}q^{(j-i)(j-i-1)/2}q^{-j(j+1)/2}{j\choose i}_{\!\!q}=0
\end{align*}
in $\NA (V)$. (We use the notation in Lemma~\ref{ad pi}.)
Then by Lemma~\ref{le:binomsum} it follows that
\begin{align*}
&\sum_{i=0}^{m} \frac{(m)_q^!}{(m-i)_q^!}
\lambda_{(m-i,k)} \beta_{(i,m,k)}
q^{-(i+1)(2k+2m-i)/2}{k+m+1\choose i+1}_{\!\!q}=0
\end{align*}
Since
\begin{align*}
  \lambda_{(m-i,k)}=&\,
  \prod_{j=1}^{m-i}(1-q^{k-1+j}r) \frac{(k+m-i)_q^!}{(k)_q^!},
  \\
  {k+m+1\choose i+1}_{\!\!q}=&\,
  \frac{(k+m+1)_q^!}{(i+1)_q^!(k+m-i)_q^!},
\end{align*}
the latter implies that
\begin{align*}
&\sum_{i=0}^{m} {m+1\choose i+1}_{\!\!q}
\prod_{j=1}^{m-i}(1-q^{k-1+j}r)\beta_{(i,m,k)}
q^{-(i+1)(2k+2m-i)/2}=0.
\end{align*}
Now substitute $i=m-l$. It follows that
\begin{align*}
&\sum_{l=0}^{m} {m+1\choose l}_{\!\!q}
\prod_{j=0}^{l-1}(1-q^{k+j}r)\prod_{j=1}^{m-l}(q^{2k+m-j}r-r^{-1})
q^{l(2k+l-1)/2}=0.
\end{align*}
The latter is equal to $(-r)^{-m}Q_1^{k,m}$. Thus $Q_2^{k,m}=0$
by Lemma~\ref{le:Q1Q2}.
\end{proof}

Now we introduce the set $\ndJ$ which is crucial for Theorem~\ref{main}
below.

\begin{defi}
  Let $\ndJ =\ndJ_{q,r,s}\subseteq \ndN_0$ be such that $j\in \ndJ$
  if and only if
$$q^{j(j-1)/2}(-r)^js=-1 \text{ and $q^{j+n-1}r^2\ne 1$ for any $n\in \ndJ$, $n<j$.} $$
\end{defi}

\begin{lemm} \label{le:Jsucc}
For any $j\in \ndJ$, the integers $j+1$ and $j+2$ are not in $\ndJ$.
In particular, for any $m\in \ndN_0$,
$$\big| \ndJ \cap [0,m]\big| \le \frac{m}{3}+1.$$
\end{lemm}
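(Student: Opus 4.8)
The plan is to reduce everything to the explicit equation hidden in membership of $\ndJ$. For $j\in\ndN_0$, write $C(j)$ for the relation
$$q^{j(j-1)/2}(-r)^js=-1,$$
which is the first of the two defining conditions of $j\in\ndJ$. I observe that once I know any two distinct elements of $\ndJ$ differ by at least $3$, the quantitative bound is a routine counting argument; so the heart of the lemma is the claim that $j+1,j+2\notin\ndJ$ whenever $j\in\ndJ$.

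For that claim I would argue by contradiction, using that $C$ behaves multiplicatively (recall $q,r,s\in\fie^\times$, so all the following divisions are legitimate). Suppose $j\in\ndJ$ and $j+1\in\ndJ$. Dividing $C(j+1)$ by $C(j)$ cancels $s$ and leaves $-q^jr=1$, whence $q^{2j}r^2=1$. But since $j\in\ndJ$ and $j<j+1$, the second defining condition of $j+1\in\ndJ$, applied with the witness $n=j$, asserts exactly $q^{(j+1)+j-1}r^2=q^{2j}r^2\ne1$, a contradiction. The case $j+2$ is analogous: dividing $C(j+2)$ by $C(j)$ now yields $q^{2j+1}r^2=1$, while the defining condition of $j+2\in\ndJ$ applied with $n=j$ requires $q^{(j+2)+j-1}r^2=q^{2j+1}r^2\ne1$. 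Hence $j+1,j+2\notin\ndJ$.

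Granting this, I would finish as follows. List the elements of $\ndJ\cap[0,m]$ in increasing order as $j_1<j_2<\cdots<j_t$. By the previous step neither $j_i+1$ nor $j_i+2$ lies in $\ndJ$, so $j_{i+1}\ge j_i+3$ for every $i$; iterating gives $j_t\ge j_1+3(t-1)\ge3(t-1)$. Since $j_t\le m$, this forces $3(t-1)\le m$, i.e.\ $t\le m/3+1$, which is the asserted inequality.

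I do not expect a genuine obstacle here. The only point demanding care is bookkeeping: one must verify that the equation produced by dividing the two $C$-relations is precisely the negation of the correct instance of the exclusion clause in the definition of $\ndJ$ — that is, that $q^{2j}r^2\ne1$ (respectively $q^{2j+1}r^2\ne1$) is exactly what the clause ``$q^{j'+n-1}r^2\ne1$ for $n\in\ndJ$, $n<j'$'' demands for the candidate $j'=j+1$ (respectively $j'=j+2$) and the witness $n=j$.
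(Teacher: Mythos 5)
Your proof is correct and follows essentially the same route as the paper: both divide the membership equation for $j+t$ by that for $j$ and contradict the exclusion clause of the definition of $\ndJ$ with the witness $n=j$. The only cosmetic difference is that the paper handles $t=1$ and $t=2$ simultaneously via the identity $q^{t(2j+t-1)/2}(-r)^t=1$, whereas you treat the two cases separately and also spell out the final counting argument that the paper leaves implicit.
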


\begin{proof}
Let $j\in \ndN_0$ and $t\in \ndN$. Assume that $j,j+t\in \ndJ$. Then
$$ q^{j(j-1)/2}(-r)^js=-1,\quad
q^{(j+t)(j+t-1)/2}(-r)^{j+t}s=-1,$$
and $q^{2j+t-1}r^2\ne 1$.
Hence $q^{t(2j+t-1)/2}(-r)^t=1$. This gives a contradiction
both for $t=1$ and for $t=2$.
\end{proof}

\begin{exam} \label{ex:smallJentries}
  By the definition of $\ndJ$ and by Lemma~\ref{le:Jsucc} the following hold.
\begin{enumerate}
\item $0\in \ndJ $ if and only if $s=-1$.
\item $1\in \ndJ $ if and only if $rs=1$ and $s\ne -1$.
\item $2\in \ndJ $ if and only if $qr^2s=-1$, $s\ne -1$, and $rs\ne 1$.
\end{enumerate}
\end{exam}

For the proof of the next theorem we will need a technicality.

\begin{lemm} \label{le:main}
	Assume that $\mathrm{char}(\fie)=0$.
	Let $k,m\in \ndN _0$, and assume that $b_{k+m+1}\ne 0$ and
  $q^{2k+m}r^2=1$.
	Then $Q_2^{k,m}\ne 0$ in $\fie $.
\end{lemm}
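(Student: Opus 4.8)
The plan is to read $Q_2^{k,m}$ not as a literal fraction (its denominator $q^{2k+m}r^2-1$ vanishes under our hypothesis) but as the honest polynomial in $\ndZ[q,r]$ produced by Lemma~\ref{le:Q1Q2}, and then to evaluate its image under the specialization $\ndZ[q,r]\to\fie$. Concretely, I would cancel the offending factor of the denominator \emph{before} specializing. The one input I would record at the start is that $b_{k+m+1}=\prod_{j=0}^{k+m}(1-q^jr)\ne0$ forces every factor $1-q^{k+i}r$ with $0\le i\le m$ to be nonzero, so that $\prod_{i=0}^m(1-q^{k+i}r)\ne0$ in $\fie$. The argument then splits according to the parity of $m$, exactly as the two cases in the proof of Lemma~\ref{le:Q1Q2}.

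For $m$ odd the exponent $(m+1)/2$ is an integer and $(-1)^{m+1}=1$, so the numerator of $Q_2^{k,m}$ is $(q^{2k+m}r^2)^{(m+1)/2}-1$ while the denominator is $q^{2k+m}r^2-1$. Their quotient is the polynomial $\sum_{l=0}^{(m-1)/2}(q^{2k+m}r^2)^l$, which under $q^{2k+m}r^2=1$ specializes to $(m+1)/2$. Hence $Q_2^{k,m}$ specializes to $\tfrac{m+1}{2}\prod_{i=0}^m(1-q^{k+i}r)$, which is nonzero because $\mathrm{char}(\fie)=0$ makes $\tfrac{m+1}{2}\ne0$ and the product is nonzero.

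For $m$ even I would set $b=q^{k+m/2}r$, so that $q^{2k+m}r^2=b^2=1$ and the denominator factors as $(b-1)(b+1)$. Here the key observation, and the place I expect the real work to sit, is the bookkeeping of which factor cancels where: the factor $1-q^{k+m/2}r=1-b$ already appears inside $\prod_{i=0}^m(1-q^{k+i}r)$, so $(b-1)$ cancels against the product, while $(b+1)$ divides the numerator $-(b^{m+1}+1)$ since $m+1$ is odd. The quotient by $(b+1)$ is $-\sum_{l=0}^m(-b)^l$. Crucially, $b_{k+m+1}\ne0$ gives $1-b\ne0$, i.e. $b\ne1$, so from $b^2=1$ I may conclude $b=-1$; this is exactly what prevents the product factor from vanishing and is the step that genuinely uses the hypothesis $b_{k+m+1}\ne0$. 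Collecting the cancellations and specializing at $b=-1$, one finds that $Q_2^{k,m}$ specializes to $(m+1)\prod_{i\ne m/2}(1-q^{k+i}r)$, again nonzero since $\mathrm{char}(\fie)=0$ and each remaining factor is nonzero. In both cases $Q_2^{k,m}\ne0$ in $\fie$, which is the claim. The only subtlety beyond the parity split is making sure the cancellations are performed as polynomial identities (valid for all $q,r$) before the specialization is applied, so that the apparent $0/0$ never actually occurs.
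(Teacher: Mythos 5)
Your proof is correct and follows essentially the same route as the paper's: both split on the parity of $m$, cancel the vanishing denominator factor of $Q_2^{k,m}$ as a polynomial identity (using $b_{k+m+1}\ne 0$ to force $q^{k+m/2}r=-1$ in the even case), and reduce the claim to $\tfrac{m+1}{2}$ resp.\ $m+1$ being nonzero in characteristic $0$ times a nonvanishing product. Your explicit bookkeeping of which factor cancels where is just a more detailed spelling-out of what the paper does implicitly.
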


\begin{proof}
	Assume first that $m$ is odd and that $q^{2k+m}r^2=1$. Then
	\begin{align*}
		Q_2^{k,m}=\sum_{i=0}^{(m-1)/2}(q^{2k+m}r^2)^i
		\prod_{i=0}^m(1-q^{k+i}r)=\frac{m+1}2\prod_{i=0}^m(1-q^{k+i}r).
	\end{align*}
	Since $b_{k+m+1}\ne 0$ and $\mathrm{char}(\fie)=0$, we conclude that
	$Q_2^{k,m}\ne 0$ in $\fie $.

	Assume now that $q^{2k+m}r^2=1$ and that $m$ is even. Let $n=m/2$. Since
	$b_{k+m+1}\ne 0$, it follows that $q^{k+n}r=-1$. Hence
	\begin{align*}
	  Q_2^{k,m}=\sum_{i=0}^{m}(-q^{k+n}r)^i
		\prod_{i=0}^{n-1}(1-q^{k+i}r)\prod_{i=n+1}^m(1-q^{k+i}r).
	\end{align*}
	Thus we again obtain that $Q_2^{k,m}\ne 0$ in $\fie $.
\end{proof}

\begin{theo} \label{main}
	Assume that $\mathrm{char}(\fie )=0$. Let $m\in \ndN_0$ such that
$(m)_q^!b_m\ne 0$. Then the elements $(\ad x_1)^{m-j}(P_j)$ with $j\in \ndJ \cap[0,m]$
form a basis of $\ker(\pi)\cap U_m$.
\end{theo}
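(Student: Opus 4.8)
The plan is to establish the theorem by separately proving linear independence of the proposed family, containment in $\ker(\pi)\cap U_m$, and a dimension count showing that these elements span. The whole strategy rests on the two maps $d_1$ and $\ad x_1$ interacting with the subspaces $U_m$ and $U_m'$, together with the arithmetic of the set $\ndJ$.

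First I would verify that each $(\ad x_1)^{m-j}(P_j)$ with $j\in\ndJ\cap[0,m]$ actually lies in $\ker(\pi)\cap U_m$. Membership in $U_m$ is clear since $P_j\in U_j$ and $\ad x_1$ raises the $U$-index by one (as in Lemma~\ref{le:adxinjective}). For the kernel, note that $j\in\ndJ$ forces $q^{j(j-1)/2}(-r)^js=-1$, so $P_j=0$ in $\NA(V)$ by Lemma~\ref{d(Pk)}; since $\ad x_1$ is induced on $\NA(V)$ and $\pi$ is a morphism, $(\ad x_1)^{m-j}(P_j)=0$ in $\NA(V)$ as well, giving the containment.

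Next I would prove linear independence. The cleanest route is to filter $\ker(\pi)\cap U_m$ by applying $d_1$ repeatedly and tracking a leading term. By Lemma~\ref{le:adxinjective} the map $\ad x_1$ is injective on each $U_n$, so each $(\ad x_1)^{m-j}(P_j)$ is nonzero in $T(V)$. The elements indexed by distinct $j$ can be distinguished by the number of times $d_1$ annihilates them, or by examining the minimal index of the $\dpu_i\dpu_{m-i}$ appearing; Lemma~\ref{ad pi} gives an explicit expansion of $(\ad x_1)^{m-j}(P_j)$ in the basis $\su(m,i)$, and the distinct values of $j$ produce distinct leading $\su$-terms with nonzero coefficients (the coefficients $\lambda_{(\cdot,j)}\beta_{(\cdot,m-j,j)}$ are nonzero precisely because $b_{m}\ne 0$). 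This yields a triangular structure forcing independence.

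The hard part will be the dimension count, i.e.\ showing the family spans all of $\ker(\pi)\cap U_m$, and this is where Lemma~\ref{le:411} and Lemma~\ref{le:main} enter decisively. The plan is to take an arbitrary $v\in\ker(\pi)\cap U_m$ and argue by induction on $m$ using the derivation $d_1$. Since $d_2(v)$ is constrained (it must again lie in $\ker\pi$, forcing the $\lambda_0$-type coefficient to vanish as in Proposition~\ref{pr:linsys}), one studies $d_1(v)\in U_{m-1}'\cap\ker(\pi)$ and wishes to match it, up to the inductively-known elements, with some $(\ad x_1)^{m-1-j'}(P_{j'})$. The obstruction to such a matching being possible is exactly governed by whether $Q_2^{k,m'}$ vanishes: Lemma~\ref{le:411} says that if $d_1(v)$ equals $(\ad x_1)^{m'}(P_k)$ for the relevant indices then $Q_2^{k,m'}=0$, while Lemma~\ref{le:main} asserts (in characteristic $0$) that $Q_2^{k,m'}\ne 0$ whenever $q^{2k+m'}r^2=1$. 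The interplay of these two facts, combined with the defining conditions of $\ndJ$ (the condition $q^{j+n-1}r^2\ne1$ for $n<j$ in $\ndJ$), forces every new kernel element to be accounted for by exactly the $P_j$ with $j\in\ndJ$, and excludes spurious ones. Carefully organizing this induction, and confirming that the count of admissible $j$ matches $\dim(\ker(\pi)\cap U_m)$, is the crux; the characteristic-$0$ hypothesis is indispensable precisely through Lemma~\ref{le:main}.
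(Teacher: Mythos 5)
Your overall architecture (containment, linear independence, dimension count driven by $d_1$, Lemmas~\ref{le:411} and~\ref{le:main}) is the same as the paper's, and your containment step is correct. But your linear independence argument has a genuine flaw. You claim that in the expansion of $(\ad x_1)^{m-j}(P_j)$ given by Lemma~\ref{ad pi} the coefficients $\lambda_{(\cdot,j)}\beta_{(\cdot,m-j,j)}$ are nonzero ``precisely because $b_m\ne 0$'', so that distinct $j$ yield distinct leading $\su$-terms. This is false: $\beta_{(i,m-j,j)}=\prod_{l=1}^{i}\bigl(q^{m+j-l}r-r^{-1}\bigr)$ vanishes as soon as $q^{m+j-l}r^2=1$ for some $1\le l\le i$, and neither $b_m\ne 0$ nor $(m)_q^!\ne 0$ rules this out (these hypotheses control $1-q^lr$ and $(l)_q$, not $1-q^lr^2$). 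For instance, if $q^{m+j-1}r^2=1$ — which is realizable for $j\in\ndJ$ together with all the standing hypotheses — then $\beta_{(i,m-j,j)}=0$ for every $i\ge 1$ and $(\ad x_1)^{m-j}(P_j)$ collapses to a scalar multiple of $\su(m,0)=P_m$, so the triangular structure you rely on disappears. The nonvanishing actually needed is $q^{m+j-1}r^2\ne 1$ for $j\in\ndJ$ with $j<m$, which is exactly the second defining condition of $\ndJ$ (in force when $m\in\ndJ$), not a consequence of $b_m\ne 0$. The paper's independence proof is arranged to exploit precisely this: it inducts on $m$, peeling off one $\ad x_1$ by injectivity (Lemma~\ref{le:adxinjective}) when $m\notin\ndJ$, and, when $m\in\ndJ$, applying $d_1$ and using $d_1(P_j)=0$ together with Lemma~\ref{le:d1ad} to produce coefficients $(m-j)_q(1-q^{m+j-1}r^2)$, nonzero by the definition of $\ndJ$ and $(m)_q^!\ne 0$.

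For the spanning part you name the right ingredients (induction via $d_1$, Proposition~\ref{pr:linsys}, Lemmas~\ref{le:411} and~\ref{le:main}, the defining conditions of $\ndJ$), but you explicitly defer the organization of the induction, and that organization is where the content lies. What is missing: by injectivity of $\ad x_1$ one has $\dim(\ker\pi\cap U_m)\ge\dim(\ker\pi\cap U_{m-1})$; if equality holds, then $m\notin\ndJ$ (by the already-established independence) and the count follows; if the inequality is strict, then $\ker\pi\cap U_m\cap\ker d_1\ne 0$, and Lemma~\ref{le:dZ} identifies $\ker(d_1|_{U_m})=\fie P_m$, so $P_m\in\ker\pi$ — forcing $q^{m(m-1)/2}(-r)^ms=-1$ by Lemma~\ref{d(Pk)}, a dimension jump of exactly one, and surjectivity of $d_1$ from $\ker\pi\cap U_m$ onto $\ker\pi\cap U_{m-1}$. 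Only this surjectivity produces, for each $j\in\ndJ\cap[0,m-1]$, a preimage $v_j$ of $(\ad x_1)^{m-1-j}(P_j)$; and one must then normalize $v_j$ to lie in $U'_m$ (subtracting a multiple of $P_m$, whose $\dpu_m\dpu_0$-coefficient is nonzero), because Lemma~\ref{le:411} requires $v\in U'_{k+m+1}$ — note also that it is $v_j$, not $d_1(v_j)$, that must lie in the primed space, contrary to your phrasing. Then $Q_2^{j,m-1-j}=0$, Lemma~\ref{le:main} gives $q^{m+j-1}r^2\ne 1$ for all such $j$, hence $m\in\ndJ$ and the count matches. Without this chain of steps the ``crux'' you point to remains unproven.
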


\begin{proof}
First note that $P_j\in \ker (\pi)\cap U_j$
for any $j\in \ndJ$ because of Lemma~\ref{d(Pk)}.
Hence $(\ad x_1)^{m-j}(P_j)\in \ker (\pi)\cap U_m$ for any $j\in \ndJ \cap[0,m]$.

Now we prove by induction on $m$ that the elements
$(\ad x_1)^{m-j}(P_j)$ with $j\in \ndJ \cap [0,m]$ are linearly independent.
This is clear for $m=0$. Assume now that $m>0$, and for any $j\in \ndJ\cap[0,m]$ let
$\lambda_j\in \fie $ such that
$$\sum_{j\in \ndJ\cap[0,m]}\lambda_j(\ad x_1)^{m-j}(P_j)=0.$$
If $m\notin \ndJ$, then
$\sum_{j\in \ndJ\cap[0,m]}\lambda_j(\ad x_1)^{m-1-j}(P_j)=0$
by Lemma~\ref{le:adxinjective}. Hence $\lambda_j=0$ for all $j\in \ndJ\cap[0,m]$
by induction hypothesis.

Assume now that $m\in \ndJ$. Lemma~\ref{le:dZ} implies that $d_1(P_n)=0$ for any
$n\in \ndN_0$, and hence
$$\sum_{j\in \ndJ\cap[0,m-1]}\lambda_jd_1\big((\ad x_1)^{m-j}(P_j)\big)=0.$$
{}From Lemma~\ref{le:d1ad} then it follows that
$$\sum_{j\in \ndJ\cap[0,m-1]}\lambda_j(m-j)_q(1-q^{m-j-1}q^{2j}r^2)
(\ad x_1)^{m-1-j}(P_j)=0.$$
Note that $q^{m+j-1}r^2\ne 1$ for all $j\in \ndJ\cap [0,m-1]$ because of $m\in \ndJ$.
Moreover, $(m)_q^!\ne 0$ by assumption. Therefore
induction hypothesis implies that $\lambda_j=0$
for all $j\in \ndJ\cap [0,m-1]$. Then clearly $\lambda_m=0$ holds, too.

It remains to show that
\begin{align} \label{eq:dimkerpi}
	\dim \big(\ker (\pi)\cap U_m\big) =\big|\ndJ\cap [0,m]\big|.
\end{align}
Again we proceed by induction on $m$.
Note that $P_0=u_0^2\in \ker (\pi )$ if and only if $s=-1$, that is,
$0\in \ndJ $,
according to Lemma~\ref{d(Pk)}. Thus the claim holds for $m=0$.

Let now $m\in \ndN $.
Induction hypothesis and the first part of the proof of the Theorem imply that
the elements $(\ad x_1)^{m-1-j}(P_j)$, where $j\in \ndJ \cap [0,m-1]$, form a
basis of $\ker (\pi)\cap U_{m-1}$.
Since $\ad x_1$ is injective by Lemma~\ref{le:adxinjective} and since
$\ad x_1(\ker (\pi))\subseteq \ker (\pi)$, we further obtain that
$$\dim \big(\ker(\pi )\cap U_m\big)\ge \dim \big( \ker(\pi)\cap U_{m-1}\big).$$

Assume first that $\dim \big(\ker(\pi )\cap U_m\big)=\dim \big( \ker(\pi)\cap U_{m-1}\big)$.
Then the elements
$(\ad x_1)^{m-j}(P_j)$, where $j\in \ndJ \cap [0,m-1]$, form a basis of $\ker (\pi )\cap U_m$.
Moreover, the linear independence of the elements $(\ad x_1)^{m-j}(P_j)$, $j\in \ndJ \cap
[0,m]$, implies that $m\notin \ndJ$. This proves \eqref{eq:dimkerpi}.

Assume now that $\dim \big(\ker(\pi )\cap U_m\big)>\dim \big( \ker(\pi)\cap U_{m-1}\big)$.
Since
$$d_1(\ker(\pi )\cap U_m)\subseteq \ker(\pi)\cap U_{m-1},$$
we conclude that $\ker(\pi)\cap U_m\cap \ker (d_1)\ne 0$.
Since $(m)_q^!b_m\ne 0$,
Lemma~\ref{le:dZ}
implies that $\ker (d_1|U_m)=\fie P_m$.
Hence $P_m\in \ker(\pi)\cap U_m$,
\begin{align} \label{eq:dimkerpi1}
 \dim \big(\ker(\pi )\cap U_m\big)=1+\dim \big( \ker(\pi)\cap U_{m-1}\big),
\end{align}
and for any $j\in \ndJ$ there exists $v_j\in \ker (\pi)\cap U_m$ such that
$$d_1(v_j)=(\ad x_1)^{m-1-j}(P_j).$$
Since $P_m\in \ker(\pi)\cap U_m$,
we obtain from Lemma~\ref{d(Pk)} that
$$q^{m(m-1)/2}(-r)^ms=-1.$$
Further, we
may assume that $v_j\in \ker(\pi)\cap U'_m$ for any $j\in \ndJ\cap[0,m-1]$.
Hence $Q_2^{j,m-1-j}=0$ for any $j\in \ndJ\cap[0,m-1]$ by Lemma~\ref{le:411}.
Since $\mathrm{char}(\fie)=0$, from Lemma~\ref{le:main} we conclude that
$q^{m+j-1}r^2\ne 1$ for any $j\in \ndJ\cap[0,m-1]$. Thus $m\in \ndJ $.
Then Equation~\eqref{eq:dimkerpi} follows from \eqref{eq:dimkerpi1} and from
induction hypothesis.
\end{proof}

\begin{coro} \label{co:rootvectors}
	Assume that $\mathrm{char}(\fie )=0$. Let $k,l\in \ndN_0$
  with $k\ge l$. Suppose that $(k+l)_q^!b_{k+l}\ne 0$, and that
  $q^{k^2}r^ks=-1$ if $k=l$.
  Then the following are equivalent.
	\begin{enumerate}
		\item $[x_1^kx_2x_1^lx_2]$ is a root vector,
    \item $\big|\ndJ\cap [0,k+l]\big|\le l$.
	\end{enumerate}
\end{coro}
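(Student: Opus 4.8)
The plan is to connect the combinatorial count $|\ndJ\cap[0,k+l]|$ to the structure of root vectors of degree $(k+l)\alpha_1+2\alpha_2$, using Theorem~\ref{main} to control the kernel $\ker(\pi)\cap U_{k+l}$ and Proposition~\ref{cor:48} together with Example~\ref{ex:Lyndon2} to translate the membership statement into a counting statement. Writing $m=k+l$ throughout, the first observation is that, by Example~\ref{ex:Lyndon2}, the root vector candidate $[x_1^kx_2x_1^lx_2]$ fails to be a root vector precisely when there is a relation $\sum_{i=l}^k\lambda_i[x_1^ix_2][x_1^{m-i}x_2]=0$ in $\NA(V)$ with prescribed top and bottom coefficients; rewriting this in terms of the normalized elements $\dpu_i$ shows that such a relation lives in $U'_m\cap\ker(\pi)$ (the case $k=l$ is handled by the hypothesis $q^{k^2}r^ks=-1$, which via Lemma~\ref{le:u_m^2} ensures $[x_1^{k/2}x_2]^2$ behaves correctly). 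Thus the condition (1) is governed by the dimension and structure of $\ker(\pi)\cap U_m$.

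Next I would invoke Theorem~\ref{main}: the elements $(\ad x_1)^{m-j}(P_j)$ with $j\in\ndJ\cap[0,m]$ form a basis of $\ker(\pi)\cap U_m$, so $\dim(\ker(\pi)\cap U_m)=|\ndJ\cap[0,m]|$. The key is then to understand the ``profile'' of an element of $\ker(\pi)\cap U_m$ when expanded in the basis $\dpu_i\dpu_{m-i}$, $0\le i\le m$: which initial coefficients can vanish. Because $\ad x_1$ raises the index of the lowest-appearing $\dpu$ term, a basis element $(\ad x_1)^{m-j}(P_j)$ starts at the $\dpu_0\dpu_m$ position (since $P_j$ starts there and $\ad x_1$ preserves the presence of a $\dpu_0$-term via Equation~\eqref{eq:addpu}). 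I would argue that the relevant distinction is between relations lying in $U'_m$ (those with vanishing $\dpu_0\dpu_m$ coefficient, equivalently $\lambda_0=0$) and the full space $U_m$. By a dimension count, the number of independent relations in $U'_m\cap\ker(\pi)$ equals $|\ndJ\cap[0,m]|$ minus the number of basis elements with nonzero leading coefficient, and I expect this to reduce to comparing $|\ndJ\cap[0,m]|$ against $l$.

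The heart of the equivalence is the following counting identity, which I would establish using Proposition~\ref{cor:48}: the candidate $[x_1^kx_2x_1^lx_2]$ is a root vector if and only if no relation $\sum_{i=l}^k\lambda_i\dpu_i\dpu_{m-i}=0$ exists with $\lambda_k=1$, i.e.\ if and only if every nonzero element of $\ker(\pi)\cap U_m$ has a nonzero coefficient at some $\dpu_i\dpu_{m-i}$ with $i>k$. Since candidates are totally ordered lexicographically and, by Proposition~\ref{cor:48}, non-root-vector-ness propagates downward in $l$ (equivalently, upward through the lexicographic order on candidates), the set of $l$ for which $[x_1^kx_2x_1^lx_2]$ \emph{is} a root vector is an initial segment $\{l : l\ge \text{some threshold}\}$. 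I would identify this threshold precisely as $|\ndJ\cap[0,m]|$: the number of relations forces exactly the top $|\ndJ\cap[0,m]|$ candidates (in lexicographic order, i.e.\ those with smallest $l$) to fail to be root vectors, so $[x_1^kx_2x_1^lx_2]$ is a root vector iff $l\ge |\ndJ\cap[0,m]|$, which is condition (2).

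The main obstacle I anticipate is making rigorous the claim that the $|\ndJ\cap[0,m]|$ basis relations ``use up'' exactly the top $|\ndJ\cap[0,m]|$ candidate positions—that is, controlling the \emph{leading coefficients} of the basis elements $(\ad x_1)^{m-j}(P_j)$ expanded in the $\dpu_i\dpu_{m-i}$ basis. One must show these leading positions are distinct and occupy precisely the highest indices, so that row-reduction of the relation matrix pins down which candidates are non-root-vectors. This requires tracking how $\ad x_1$ shifts the top-degree term of each $\su(k+m,t)$ via Lemma~\ref{le:adx1su} and Lemma~\ref{ad pi}, and verifying (using $(m)_q^!b_m\ne 0$ to guarantee the relevant quantum-integer and $b$-factors are nonzero) that no unexpected cancellation collapses two distinct leading positions. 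Once the leading positions are shown to be a staircase of distinct indices matching $|\ndJ\cap[0,m]|$ consecutive top candidates, the equivalence follows by the dimension argument combined with the monotonicity from Proposition~\ref{cor:48}.
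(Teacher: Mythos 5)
Your outline follows the paper's proof: Example~\ref{ex:Lyndon2} to translate failure of a candidate into a relation inside $U_{k+l}$, Proposition~\ref{cor:48} to see that the failing candidates are exactly those with the smallest lower index $l$, and Theorem~\ref{main} to evaluate $\dim\big(\ker(\pi)\cap U_{k+l}\big)=\big|\ndJ\cap[0,k+l]\big|$; your final criterion ``root vector iff $l\ge \big|\ndJ\cap[0,k+l]\big|$'' is exactly the paper's. Moreover, one implication is already complete in your sketch: each failing candidate with lower index $k_2$ yields, by Example~\ref{ex:Lyndon2}, a relation whose highest nonzero coefficient sits at position $m-k_2$, so the relations coming from distinct failing candidates are linearly independent; hence $\#\{\text{failing candidates}\}\le\dim\big(\ker(\pi)\cap U_m\big)$, and with the monotonicity of Proposition~\ref{cor:48} this gives (2)$\Rightarrow$(1).

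The gap is in (1)$\Rightarrow$(2), i.e.\ in proving $\dim\big(\ker(\pi)\cap U_m\big)\le \#\{\text{failing candidates}\}$, and the mechanism you propose for it rests on a false premise. There is no ``staircase'': under $(m)_q^!b_m\ne0$, \emph{every} basis vector $(\ad x_1)^{m-j}(P_j)$, $j\in\ndJ\cap[0,m]$, has nonzero coefficient at \emph{both} extreme positions; e.g.\ its coefficient at $\dpu_m\dpu_0$ equals $(-q_{21})^jq^{j(j-1)/2}\prod_{n=j}^{m-1}(n+1)_q(1-q^nr)\ne0$, and similarly at $\dpu_0\dpu_m$, by Equation~\eqref{eq:addpu}. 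So all $\big|\ndJ\cap[0,m]\big|$ basis elements share the same leading position, and no inspection of their raw coefficients produces distinct leading indices. (Also, $U_m'$ omits $u_mu_0$, i.e.\ it is the condition $\lambda_m=0$, not $\lambda_0=0$.) More importantly, even after row-reducing the kernel to obtain distinct top positions, your ``i.e.'' step silently assumes that a relation whose top position is $i$ forces the candidate $[x_1^ix_2x_1^{m-i}x_2]$ to fail. This does not follow from Example~\ref{ex:Lyndon2}, whose criterion requires a relation supported exactly in $[m-i,i]$ with prescribed coefficients at both ends. The missing ingredient is Kharchenko's PBW theorem: writing $u_iu_{m-i}=[x_1^ix_2x_1^{m-i}x_2]+\chi(\beta_i,\beta_{m-i})u_{m-i}u_i$ for $i>m/2$, the linear independence of the PBW monomials (the decreasing products $u_au_b$ with $a\le b$ together with the surviving root vectors of degree $m\alpha_1+2\alpha_2$) shows that a relation with top position $i$ is impossible unless some candidate with top index at most $i$ fails, and Proposition~\ref{cor:48} then propagates the failure up to index $i$; equivalently, $\pi(U_m)$ contains $m+1-N$ distinct PBW monomials, forcing $\dim\ker\le N$. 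This counting argument is what the paper's terse sentence ``This just means that $\dim\big(\ker(\pi)\cap U_{k+l}\big)\le l$'' compresses; without it your proposal only establishes (2)$\Rightarrow$(1).
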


\begin{proof}
	By assumption, $[x_1^kx_2x_1^lx_2]$ is a root vector candidate.
	Proposition~\ref{cor:48} and Example~\ref{ex:Lyndon2}
	imply that $[x_1^kx_2x_1^lx_2]$ is a root vector if and only if
	any root vector candidate of degree $(k+l)\alpha_1+2\alpha_2$,
	which is not a root vector, is of the form
	$[x_1^{k_1}x_2x_1^{k_2}x_2]$ with $k_1+k_2=k+l$, $0\le k_2<l$.
	This just means that $\dim \big(\ker (\pi)\cap U_{k+l}\big)\le l$.
	According to Theorem~\ref{main}, the latter is equivalent to
	$\big|\ndJ\cap[0,k+l]\big|\le l$.
\end{proof}

\begin{coro} \label{size}
  Assume that $\mathrm{char}(\fie )=0$. Let $m\in \ndN_0$
	such that $(m)_q^!b_m\ne 0$.
Then the multiplicity of $m\alpha_1+2\alpha_2$ is
$$ m'-\big|\ndJ\cap[0,m]\big|,$$
where
\begin{align*}
  m'=\begin{cases} (m+1)/2 & \text{if $m$ is odd,}\\
  m/2 & \text{if $m$ is even and $q^{m^2/4}r^{m/2}s\ne -1$,}\\
  m/2+1 & \text{if $m$ is even and $q^{m^2/4}r^{m/2}s=-1$.}
\end{cases}
\end{align*}
\end{coro}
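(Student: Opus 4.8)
The plan is to count directly: the multiplicity of $m\alpha_1+2\alpha_2$ is by definition the number of root vector candidates of this degree that are actually root vectors, so I would first enumerate the candidates and then apply Corollary~\ref{co:rootvectors} to decide which of them survive.

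First I would invoke Example~\ref{ex:Lyndon2} to list the root vector candidates of degree $m\alpha_1+2\alpha_2$. These are the elements $[x_1^{k_1}x_2x_1^{k_2}x_2]$ with $k_1+k_2=m$ and $k_1>k_2$, together with the single extra candidate $[x_1^{m/2}x_2]^2$ precisely when $m$ is even and $q^{m^2/4}r^{m/2}s=-1$. Setting $l=\min(k_1,k_2)=k_2$, the candidates are indexed bijectively by $l\in\{0,1,\dots,m'-1\}$, where $m'$ is exactly the case-by-case count in the statement; in particular the balanced index $l=m/2$ occurs (and corresponds to the square) exactly in the ``extra'' even case. This establishes that there are precisely $m'$ candidates.

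Next, for the candidate with parameter $l$ I would set $k=m-l\ge l$ and apply Corollary~\ref{co:rootvectors}. Its hypotheses hold: $(k+l)_q^!b_{k+l}=(m)_q^!b_m\ne0$ by assumption, and in the boundary case $k=l=m/2$ the required identity $q^{m^2/4}r^{m/2}s=-1$ is exactly the condition defining the extra candidate. The corollary then says that the candidate indexed by $l$ is a root vector if and only if $|\ndJ\cap[0,m]|\le l$. Hence the candidates that fail to be root vectors are exactly those with $l<|\ndJ\cap[0,m]|$, that is, an initial segment of the index set $\{0,\dots,m'-1\}$.

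It remains to count, and here lies the only genuine subtlety: to conclude that the number of root vectors is $m'-|\ndJ\cap[0,m]|$ rather than $\max(0,m'-|\ndJ\cap[0,m]|)$, I must know that $|\ndJ\cap[0,m]|\le m'$, so that the initial segment of failing indices does not overrun the whole index set. I would obtain this from Theorem~\ref{main}: each failing candidate produces, via the relation in Example~\ref{ex:Lyndon2}, an element of $\ker(\pi)\cap U_m$ whose leading term is $\dpu_{k_1}\dpu_{k_2}$ with $k_1=m-l$. Since $(m)_q^!b_m\ne0$ forces $(j)_q^!b_j\ne0$, hence $u_j\ne0$ and $[x_1^jx_2]$ a root vector, for all $0\le j\le m$ (Remark~\ref{re:Deltau} and Example~\ref{ex:Lyndon2}), these leading monomials are linearly independent PBW elements; a triangular argument then shows that the number of failing candidates equals $\dim(\ker(\pi)\cap U_m)=|\ndJ\cap[0,m]|$ by Theorem~\ref{main}. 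In particular $|\ndJ\cap[0,m]|\le m'$, so the multiplicity is exactly $m'-|\ndJ\cap[0,m]|$. The main obstacle is thus not the root-vector criterion itself but pinning down this exact correspondence between failing candidates and a basis of $\ker(\pi)\cap U_m$; everything else is bookkeeping over the three parity cases defining $m'$.
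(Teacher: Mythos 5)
Your proposal is correct and follows essentially the same route as the paper: Example~\ref{ex:Lyndon2} gives the count $m'$ of root vector candidates, and Corollary~\ref{co:rootvectors} shows that the failing ones are exactly those indexed by $l<\big|\ndJ\cap[0,m]\big|$, which is precisely the paper's two-step argument. Your extra verification that $\big|\ndJ\cap[0,m]\big|\le m'$ (left implicit in the paper) is sound in substance, though note one slip: the leading terms $\dpu_{m-l}\dpu_l$ with $m-l>l$ are the \emph{non}-PBW-ordered products, and it is the reversed products $\dpu_l\dpu_{m-l}$ with $l<m-l$ that are PBW basis elements of $\NA(V)$, whose linear independence yields the bound $\dim\big(\ker(\pi)\cap U_m\big)\le m'$.
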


\begin{proof}
	By Example~\ref{ex:Lyndon2}, $m'$ is just the number of root vector candidates
	of degree $m\alpha_1+2\alpha_2$. Corollary~\ref{co:rootvectors} implies that
	$\big|\ndJ\cap[0,m]\big|$ is the number of root vector candidates of degree
	$m\alpha_1+2\alpha_2$ which are	not root vectors. This implies the claim.
\end{proof}

The following proposition treats the question in Corollary~\ref{size} if the
assumption on $m$ is not satisfied. Recall that $R_1(V)$ is the reflection of
$V$ on the first vertex.

\begin{prop} \label{pr:refl}
  Assume that $\mathrm{char}(\fie )=0$. Let $k,m\in \ndN_0$
	such that $(k)_q^!b_k\ne 0$, $(k+1)_q(1-q^kr)=0$, and $m\ge k$.
	Then the multiplicity of $m\alpha_1+2\alpha_2$ is
	the same as the multiplicity of $(2k-m)\alpha_1+2\alpha_2$ of $\NA (R_1(V))$.
\end{prop}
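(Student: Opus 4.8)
The plan is to exploit the reflection $R_1(V)$ as a degree-preserving-up-to-Weyl-group-action device that identifies the root system of $\NA(V)$ with that of $\NA(R_1(V))$. The key structural fact, due to \cite{H2006}, is that the reflection induces an isomorphism of the two root systems intertwined by the map $s_1\in \mathrm{GL}(\ndZ^2)$; consequently, for any $\beta\in \ndZ^2$ the multiplicity of $\beta$ as a root of $\NA(V)$ equals the multiplicity of $s_1(\beta)$ as a root of $\NA(R_1(V))$. So the whole proposition reduces to computing $s_1(m\alpha_1+2\alpha_2)$ and checking it equals $(2k-m)\alpha_1+2\alpha_2$.

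First I would compute the Cartan entry $c_{12}$ for $V$. By the hypotheses $(k)_q^!b_k\ne 0$ and $(k+1)_q(1-q^kr)=0$, the integer $k$ is exactly the minimal value realizing the defining condition for $c_{12}$, so $c_{12}=-k$. Then $s_1(\alpha_2)=\alpha_2-c_{12}\alpha_1=\alpha_2+k\alpha_1$ and $s_1(\alpha_1)=-\alpha_1$. Therefore
\begin{align*}
	s_1(m\alpha_1+2\alpha_2)&=m s_1(\alpha_1)+2s_1(\alpha_2)\\
	&=-m\alpha_1+2(k\alpha_1+\alpha_2)\\
	&=(2k-m)\alpha_1+2\alpha_2.
\end{align*}
This is precisely the degree whose multiplicity appears on the right-hand side. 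The condition $m\ge k$ guarantees $2k-m\le k$, keeping us inside the regime where the earlier analysis applies; one should remark that the hypotheses on $V$ (together with $\mathrm{char}(\fie)=0$) are exactly what is needed to define $c_{12}$, hence $s_1$ and $R_1(V)$, at all.

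Next I would invoke the general reflection theory to conclude the multiplicity statement. Since $s_1$ is the reflection associated to $R_1(V)$ and the positive root systems $\boldsymbol{\Delta}_+$ of $\NA(V)$ and of $\NA(R_1(V))$ are related by $s_1$ (and $s_1^2=\id$), the number of Lyndon words in the respective sets $L$ of the given degrees coincide, which is exactly the equality of multiplicities asserted.

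The main obstacle here is not any computation but rather a bookkeeping subtlety: the reflection $s_1$ sends a positive degree $m\alpha_1+2\alpha_2$ to $(2k-m)\alpha_1+2\alpha_2$, which need not be a positive combination of $\alpha_1,\alpha_2$ when $m>2k$; one must argue that multiplicities are nevertheless well-defined and preserved under the full reflection of root systems, invoking the Weyl-groupoid-equivariance of root multiplicities from \cite{H2006} rather than a naive degree comparison. The sign bookkeeping of $s_1(\alpha_1)=-\alpha_1$ and verifying that the reflection is genuinely an involution on the relevant root data is the only place where care is required; everything else is the direct substitution above.
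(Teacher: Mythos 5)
Your proof is correct and takes essentially the same route as the paper: the paper's entire proof is the single observation that the claim is a special case of the invariance of root multiplicities under reflections established in \cite{H2006}, and your verification that $c_{12}=-k$ (forced by $(k)_q^!b_k\ne 0$ together with $(k+1)_q(1-q^kr)=0$) and that $s_1(m\alpha_1+2\alpha_2)=(2k-m)\alpha_1+2\alpha_2$ is exactly the bookkeeping the paper leaves implicit (and records in the remark following the proposition). Your caution about the case $2k-m<0$ is also handled by the same invariance statement, so nothing further is needed.
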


\begin{proof}
	The claim is a very special case of the invariance of multiplicities under
	reflections, which was proved in \cite{H2006}.
\end{proof}

\begin{rema}
	According to the explanations in Section~\ref{se:prelims},
	in Proposition~\ref{pr:refl} we have $c_{12}=-k$. Hence
	the braiding matrix $(q'_{ij})_{i,j\in\{1,2\}}$ of $R_1(V)$ satisfies
	$$ q'_{11}=q,\quad q'_{12}q'_{21}=r, \quad q'_{22}=s $$
	whenever $q^kr=1$, and
	$$ q'_{11}=q,\quad q'_{12}q'_{21}=q^2r^{-1}, \quad q'_{22}=qr^ks $$
	whenever $q^kr\ne 1$ (and then $(k+1)_q=0$).
	Since $2k-m\le k$, the multiplicity of $(2k-m)\alpha_1+2\alpha_2$ of $\NA
	(R_1(V))$ can be obtained using Corollary~\ref{size} with the set $\ndJ$ for
	$(q'_{ij})_{i,j\in \{1,2\}}$.
\end{rema}

Finally, we discuss the multiplicity of roots in some special cases.

\begin{coro} \label{co:non-roots}
Assume that $\mathrm{char}(\fie )=0$. Let $m\in \ndN_0$.
\begin{enumerate}
\item Assume that $m\in \{1,2,3,4,6\}$ and that $(m)_q^!b_m\ne 0$.
Then $m\alpha_1+2\alpha_2$ is not a root if and only if
$q,r,s$ satisfy the conditions given in Table~\ref{ta:noroot}.
\item Assume that $m=2k+1\ge 5$ is odd and that $(k+3)_q^!b_{k+3}\ne0$.
Then $m\alpha_1+2\alpha_2$ is a root of $\NA (V)$.
\item Assume that $m=2k\ge 8$ and that $(k+4)_q^!b_{k+4}\ne 0$.
Then $m\alpha_1+2\alpha_2$ is a root of $\NA (V)$.
\end{enumerate}
\end{coro}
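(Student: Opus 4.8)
The plan is to reduce everything to the combinatorial criterion provided by Corollary~\ref{size}, so that the statement becomes a finite case analysis on the size of $\ndJ\cap[0,m]$. Recall that by Corollary~\ref{size} the multiplicity of $m\alpha_1+2\alpha_2$ equals $m'-|\ndJ\cap[0,m]|$, where $m'$ is the number of root vector candidates computed in Example~\ref{ex:Lyndon2}. Thus $m\alpha_1+2\alpha_2$ fails to be a root precisely when $|\ndJ\cap[0,m]|=m'$, i.e.\ when \emph{every} root vector candidate of that degree is killed. The whole problem therefore turns into controlling how large $\ndJ\cap[0,m]$ can be, and this is exactly where Lemma~\ref{le:Jsucc} enters: consecutive elements of $\ndJ$ must be at least three apart, so $|\ndJ\cap[0,m]|\le m/3+1$.

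For part (2), where $m=2k+1\ge 5$ is odd, one has $m'=(m+1)/2=k+1$ by Example~\ref{ex:Lyndon2}, while Lemma~\ref{le:Jsucc} gives $|\ndJ\cap[0,m]|\le m/3+1=(2k+1)/3+1$. The plan is to check that for $k\ge 2$ this upper bound is strictly smaller than $k+1$, so the multiplicity $m'-|\ndJ\cap[0,m]|$ is positive and $m\alpha_1+2\alpha_2$ is a root. Concretely, $(2k+1)/3+1<k+1$ is equivalent to $2k+1<3k$, i.e.\ $k>1$, which holds since $m\ge 5$ forces $k\ge 2$. The hypothesis $(k+3)_q^!b_{k+3}\ne 0$ is the technical condition guaranteeing that the relevant $u_i$ do not already vanish so that Corollary~\ref{size} applies in the degrees involved. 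For part (3), where $m=2k\ge 8$, the candidate count $m'$ is either $k$ or $k+1$ depending on whether $q^{k^2}r^k s=-1$; taking the worst case $m'=k$ and the bound $|\ndJ\cap[0,m]|\le m/3+1=2k/3+1$, one checks $2k/3+1<k$, i.e.\ $k>3$, which holds since $m\ge 8$ gives $k\ge 4$. Again the multiplicity is positive, so $m\alpha_1+2\alpha_2$ is a root.

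For part (1), the finitely many small cases $m\in\{1,2,3,4,6\}$ must be handled by hand, since there the crude counting bound no longer forces a strict inequality and the answer genuinely depends on $q,r,s$. Here the plan is to use the explicit descriptions of small membership in $\ndJ$ from Example~\ref{ex:smallJentries} (namely $0\in\ndJ\Leftrightarrow s=-1$, $1\in\ndJ\Leftrightarrow rs=1,\ s\ne-1$, $2\in\ndJ\Leftrightarrow qr^2s=-1,\ s\ne-1,\ rs\ne1$, and their analogues for the few further indices), compute $m'$ from Example~\ref{ex:Lyndon2}, and determine for each $m$ exactly when $|\ndJ\cap[0,m]|=m'$. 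Each such condition translates into explicit polynomial equations in $q,r,s$, which are precisely the entries to be recorded in Table~\ref{ta:noroot}. I expect the main obstacle to lie in this part~(1): one must carefully enumerate which subsets of $\{0,1,\dots,m\}$ can simultaneously lie in $\ndJ$ (using the spacing constraint of Lemma~\ref{le:Jsucc} together with the definition's condition $q^{j+n-1}r^2\ne1$ for $n<j$ in $\ndJ$), and then verify that the resulting conjunctions of polynomial conditions match the table entries, including the degenerate interactions with the nondegeneracy hypothesis $(m)_q^!b_m\ne0$. Parts (2) and (3) are essentially immediate once the counting inequality is set up, so the real bookkeeping is concentrated in the small cases.
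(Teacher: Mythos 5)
Your reduction to the counting criterion is the right idea and matches the paper for part (1) and for \emph{one half} of parts (2) and (3), but there is a genuine gap: you assert that the hypothesis $(k+3)_q^!b_{k+3}\ne 0$ (resp.\ $(k+4)_q^!b_{k+4}\ne 0$) ``is the technical condition guaranteeing that \dots Corollary~\ref{size} applies in the degrees involved.'' It is not. Corollary~\ref{size} requires $(m)_q^!b_m\ne 0$, and for $m=2k+1\ge 7$ (resp.\ $m=2k\ge 10$) this is strictly stronger than the stated hypothesis: for example, if $q$ is a primitive root of unity of order between $k+4$ and $m$, then $(k+3)_q^!b_{k+3}\ne 0$ while $(m)_q^!b_m=0$, and your counting argument never gets off the ground because the multiplicity formula is simply not available in degree $m$. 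The weak hypothesis is the whole point of the statement; your proposal as written proves (2) only for $m=5$ and (3) only for $m=8$, the cases where $k+3=m$, resp.\ $k+4=m$.

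The paper closes this gap with a reflection argument that your proposal is missing entirely. If $(m)_q^!b_m=0$, then the Cartan entry $c_{12}$ is defined and satisfies $k+3\le -c_{12}<m$ (resp.\ $k+4\le -c_{12}<m$); Proposition~\ref{pr:refl} then identifies the multiplicity of $m\alpha_1+2\alpha_2$ with that of $(-2c_{12}-m)\alpha_1+2\alpha_2$ for the reflected braided vector space $R_1(V)$. One checks that $-2c_{12}-m$ has the same parity as $m$, is at least $5$ (resp.\ $8$), and lies below $-c_{12}$, so that the nondegeneracy hypothesis \emph{does} hold for $R_1(V)$ in the reflected degree and the counting argument via Lemma~\ref{le:Jsucc} can be applied there. (The paper also disposes of the subcase $q^{k^2}r^ks=-1$ in part (3) directly via Lemma~\ref{le:u_m^2}; this particular shortcut is avoidable with your worst-case bound $m'\ge k$, but the reflection step is not.) Your outline for part (1) --- combining Example~\ref{ex:smallJentries}, Example~\ref{ex:Lyndon2} and the spacing constraint of Lemma~\ref{le:Jsucc}, and watching the interaction with $(m)_q^!b_m\ne 0$ (e.g.\ when $qrs=-1$ for $m=2$) --- is sound and is essentially what the paper does, so the missing idea is concentrated in (2) and (3).
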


\begin{proof}
(1) We apply Corollary~\ref{size} case by case.

Assume that $m=1$. Then $m'=1$. Hence
$\alpha_1+2\alpha_2$ is not a root if and only if $\big|\ndJ\cap [0,1]\big|=1$.
According to Example~\ref{ex:smallJentries}, this is equivalent to
$(1+s)(1-rs)=0$.

Assume that $m=2$.
Then $\big|\ndJ\cap [0,2]\big|\le 1$ by Example~\ref{ex:smallJentries},
and equality holds if and only if $(1+s)(1-rs)(1+qr^2s)=0$.
Hence, if $qrs\ne -1$, then $m'=1$ and the claim is proven.
On the other hand,
if $qrs=-1$, then $m'=2$ and hence $2\alpha_1+2\alpha_2$ is a root.
Note that in this case $(1+s)(1-rs)(1+qr^2s)\ne 0$
since
$$(m)_q^!b_m=(2)_q(1-r)(1-qr)\ne 0.$$
Thus the claim is valid also in this case.

Assume that $m=3$. Then $m'=2$. Hence
$3\alpha_1+2\alpha_2$ is not a root if and only if $\big|\ndJ\cap [0,3]\big|=2$.
Due to Lemma~\ref{le:Jsucc}, the latter is only possible if $\ndJ\cap[0,3]=\{0,3\}$.
This means that $s=-1$, $q^3r^3s=1$, and $q^2r^2\ne 1$. Because of $(3)_q^!b_3\ne 0$
we can rewrite this condition to $s=-1$, $(3)_{-qr}=0$.

The conditions for $m=4$ and $m=6$ can be obtained similarly.

(2)
Assume first that $(m)_q^!b_m\ne 0$. By Corollary~\ref{size},
the multiplicity of $m\alpha_1+2\alpha_2$ is $k+1-\big|\ndJ\cap[0,m]\big|$.
By Lemma~\ref{le:Jsucc}, $\big|\ndJ\cap[0,m]\big|\le m/3+1$.
Since $3k-m=k-1>0$, we conclude that $m\alpha_1+2\alpha_2$ is a root of $\NA (V)$.

Assume now that $(m)_q^!b_m=0$.
Since $(k+3)_q^!b_{k+3}\ne 0$ by assumption,
for the Cartan matrix entry
$c_{12}$ we obtain that $k+3\le -c_{12}<m$. Moreover,
$$s_1(m\alpha_1+2\alpha_2)=(-2c_{12}-m)\alpha_1+2\alpha_2 $$
and $-2c_{12}-m$ is odd and lesser than $-c_{12}$. Moreover,
$$-2c_{12}-m-5=-2c_{12}-2k-6\ge 0$$
and hence Proposition~\ref{pr:refl} and the previous paragraph
for $R_1(V)$ imply that $m\alpha_1+2\alpha_2$ is a root of $\NA (V)$.

(3) Similar to the proof of (2). Note that $2k\alpha_1+2\alpha_2$
is always a root if $q^{k^2}r^ks=-1$ and $(k+1)_q^!b_{k+1}\ne 0$
because of Lemma~\ref{le:u_m^2}. Hence only the case where
$q^{k^2}r^ks\ne -1$ has to be considered in detail.
\end{proof}

\begin{table}[!hbp]
\label{ta:noroot}
\begin{tabular}{c|l}
$m\alpha_1+2\alpha_2$ & non-root conditions \\
\hline
$\alpha_1+2\alpha_2$ &   $(1+s)(1-rs)=0$ \\
\hline
$2\alpha_1+2\alpha_2$ &$(1+s)(1-rs)(1+qr^2s)=0$  \\
\hline
$3\alpha_1+2\alpha_2$ & $s=-1,(3)_{-qr}=0$\\
 \hline
 $4\alpha_1+2\alpha_2$ & \tabincell{l}{ $s=-1$, $(3)_{-qr}=0$ or \\
                                        $s=-1$, $q^3r^2=-1$ or\\
                                        $rs=1$, $(3)_{-q^2r}=0$}\\

\hline
$6\alpha_1+2\alpha_2$ & $q=1,s=-1$, $(3)_{-r}=0$
\end{tabular}
\caption{Table for Corollary~\ref{co:non-roots}}
\end{table}

%\bibliographystyle{plain}
%\bibliography{M2}

\end{document}